\newcommand\blfootnote[1]{%
  \begingroup
  \renewcommand\thefootnote{}\footnote{#1}%
  \addtocounter{footnote}{-1}%
  \endgroup
}
\newtheorem{maintheorem}{Theorem}
\newenvironment{proof1}{\vspace{0.2cm}\textit{\textbf{Proof  of Theorem \ref{thm:1}.}}}{\hfill \qedsymbol \medskip}
\newenvironment{proof2}{\vspace{0.2cm}\textit{\textbf{Proof  of Theorem \ref{thm:2}.}}}{\hfill \qedsymbol \medskip}
\newenvironment{proof3}{\vspace{0.2cm}\textit{\textbf{Proof  of Theorem \ref{thm:3}.}}}{\hfill \qedsymbol \medskip}
\definecolor{newblue} {RGB} {0,100,200}
\patchcmd{\section}{\normalfont}{\normalfont\normalsize}{}{}
\tikzset{
  dotted/.style={pattern=dots,pattern color=#1},
  dotted/.default=black
}
\tikzset{
  fdotted/.style={pattern=crosshatch dots,pattern color=#1},
  fdotted/.default=black
}
\tikzset{
  scopedlines/.style={pattern=north east lines,pattern color=#1},
  scopedlines/.default=black
}
\tikzset{
  hrlines/.style={pattern=horizontal lines,pattern color=#1},
  hrlines/.default=black
}
\newcommand*{\DashedArrow}[1][]{\mathbin{\tikz [baseline=-0.25ex,-latex, dashed,#1] \draw [#1] (0pt,0.5ex) -- (1.3em,0.5ex);}}
\newcommand{\q}[1]{``#1''}
\theoremstyle{plain}
\newtheorem{lem}{Lemma}[section]
\renewcommand\qedsymbol{$\blacksquare$}
\theoremstyle{definition}
\newtheorem{defn}[lem]{Definition}
\theoremstyle{definition}
\newtheorem{rmk}[lem]{Remark}
\theoremstyle{definition}
\newtheorem{con}[lem]{Construction}
\theoremstyle{plain}
\newtheorem{cor}[lem]{Corollary}
\theoremstyle{plain}
\newtheorem{prop}[lem]{Proposition}
\theoremstyle{plain}
\newtheorem{thm}[lem]{Theorem}
\theoremstyle{plain}
\theoremstyle{definition}
\newtheorem{ex}[lem]{Example}
\renewcommand{\mathbb}{\mathbf}
\title[Polygons of Newton-Okounkov type on IHS manifolds]{Polygons of Newton-Okounkov type on irreducible holomorphic symplectic manifolds}
\author{Francesco Antonio Denisi \orcidlink{0000-0002-1128-7890}}
\address{Université Paris Cité and Sorbonne Université, CNRS, IMJ-PRG, F-75013 Paris, France}
\email{denisi@imj-prg.fr}
\subjclass[2020]{Primary: 14J42, 14M25 }
\keywords{Irreducible holomorphic symplectic manifolds, hyper-Kähler manifolds, Newton-Okounkov bodies, positivity of divisors, volumes}
\begin{document}
\begin{abstract}
Let $X$ be a projective irreducible holomorphic symplectic manifold. We associate with any big $\mathbf{R}$-divisor $D$ on $X$ a convex polygon $\Delta_E^{\mathrm{num}}(D)$ of dimension 2, whose Euclidean volume is $\mathrm{vol}_{\mathbf{R}^2}(\Delta_E^{\mathrm{num}}(D))=q_X(P(D))/2$, where $E$ is any prime divisor on $X$, $q_X$ is the Beauville-Bogomolov-Fujiki form, and $P(D)$ is the positive part of the divisorial Zariski decomposition of $D$. We systematically study these polygons and observe that they behave like the Newton-Okounkov bodies of big divisors on smooth complex projective surfaces, with respect to a general admissible flag.
\end{abstract}

\maketitle
\blfootnote{This project has received funding from the European Research Council (ERC) under the European Union's Horizon 2020 research and innovation programme (ERC-2020-SyG-854361-HyperK).}
 
\paragraph{\textbf{Conventions}} A variety will be an integral, separated scheme of finite type over $\mathbf{C}$.

\section{Introduction}
One of the major steps in the theory of the positivity of divisors on projective varieties is to study the positivity of divisors via convex geometry. More precisely, thanks to the work of Lazarsfeld-Mustață \cite{Mustata} and Kaveh-Kovanskii \cite{KK}, if $Y$ is any projective variety of dimension $d$ and $D$ a big integral Cartier divisor on it (i.e. $h^0(Y,mD) \sim m^d$, for $m\gg 0$), after fixing an admissible flag, i.e.
\[
F_{\bullet}=\{Y_0 \subset Y_1 \subset \cdots \subset Y_d=Y\},
\]
with $Y_i$ a subvariety of $Y$ which is smooth at $Y_0$, and $\mathrm{dim}(Y_i)=i$, for any $0\leq i \leq  d$, one can associate with $D$, via a valuation-like function
\[
\nu=\nu_{F_{\bullet}}=\nu_{F_{\bullet},D} \colon \left(H^0(Y,\mathscr{O}_Y(D))-\{0\}\right)\to \mathbf{Z}^d,
\]
a convex body $\Delta(D)=\Delta_{F_{\bullet}}(D) \subset \mathbf{Z}^d\otimes \mathbf{R}$. The body $\Delta(D)$ is known as the Newton-Okounkov body of $D$ (with respect to the flag $F_{\bullet}$). The first manifestation of the strong relation between $\Delta(D)$ and the positivity of $D$ is given by the equality (see \ \cite[Theorem A]{Mustata}) \[
\mathrm{vol}_{\mathbf{R}^d}(\Delta(D))=\frac{1}{d!}\mathrm{vol}_Y(D). \]
In the equality above, $\mathrm{vol}_{\mathbf{R}^d}(\Delta(D))$ is the euclidean volume of $\Delta(D)$, while $\mathrm{vol}_Y(D)$ is the volume of the divisor $D$, which we recall being defined as
\[
\mathrm{vol}_Y(D)=\limsup_{m \to \infty}\frac{h^0(Y,\mathscr{O}_Y(mD))}{m^d/d!}.
\]
See the work of Küronya-Lozovanu \cite{Kuronya2} for the characterisation of the asymptotic base loci (see Subsection \ref{asympbaseloci}) via Newton-Okounkov bodies. 
\vspace{0.2cm}

Because of the strong relation between the Newton-Okounkov body of a divisor and the positivity of the divisor itself, it is important to study the Newton-Okounkov bodies of divisors. In the case of smooth projective surfaces, the theory is well developed, thanks to the works  \cite{AKL14}, \cite{Mustata}, \cite{LSS14}, \cite{Kuronya1}, \cite{RS22}, \cite{SS16}. 
Given a surface $S$ and a general admissible flag \[
F_{\bullet}=  \{\{pt\}\subset C \subset S\} ,
\] it is known that the Newton-Okounkov body of a big divisor $D$ on $S$ is determined by the way its Zariski decomposition changes when we perturb the class of $D$ in the Néron-Severi space along the negative direction induced by $C$ (see \cite[Theorem 6.4]{Mustata}). More precisely, if $D$ is a big divisor on $S$, the Newton-Okounkov body of $D$ with respect to the flag $F_{\bullet}$ is given by the polygon (see \cite[Theorem 6.4]{Mustata}, \cite[Theorem B]{Kuronya1})
\[
\Delta_{F_{\bullet}}(D)=\left\{(t,y)\in \mathbf{R}^2\; | \; 0\leq t\leq \mu_C(D), \; 0\leq y \leq P(D_t)\cdot C\right\},
\]
where $D_t:=D-tC$ and $\mu_C(D):=\mathrm{sup}\left\{t\in \mathbf{R}^{\geq 0} \; | \; D-tC \text{ is big} \right\}$. Then the Euclidean volume of this body is equal to $P(D)^2/2$, by \cite[Theorem A]{Mustata} and the equality $\mathrm{vol}_S(D)=\mathrm{vol}_S(P(D))$, where $P(D)$ is the positive part of the Zariski decomposition of $D$. 
\vspace{0.2cm}

For a comprehensive treatment of Newton-Okounkov bodies and their applications in the theory of (local) positivity of line bundles on projective varieties, we refer the reader to \cite{Kuronya3}.
\vspace{0.2cm}

We recall that an irreducible holomorphic symplectic (IHS) manifold is a simply connected compact Kähler manifold $X$ such that $H^0(X,\Omega^2_X)\cong \mathbf{C}\sigma$, where $\sigma$ is a holomorphic symplectic form. These objects play a fundamental role in Kähler geometry, and one of the reasons is certainly that, thanks to the Beauville-Bogomolov decomposition Theorem, they form one of the 3 building blocks of compact Kähler manifolds with vanishing first real Chern class. The IHS manifolds in dimension 2 are K3 surfaces. Examples in all dimensions can be constructed starting from a K3 surface or an abelian surface, using some modular constructions. These constructions lead to the deformation classes K3$^{[n]}$, $\mathrm{Kum}_n$, for any $n\geq 1$, and to the OG10, OG6 deformation classes.  
\vspace{0.2cm}

For a general introduction to IHS manifolds, we refer the reader to \cite{Joyce}. Let $X$ be an IHS manifold. Thanks to the work \cite{Beau} of Beauville, there exists a quadratic form on $H^2(X,\mathbf{C})$ generalising the intersection form on a surface. In particular, choosing the symplectic form $\sigma$ of $X$ in such a way that  $\int_X(\sigma\overline{\sigma})^n=1$, one can define
\[
q_X(\alpha):=\frac{n}{2}\int_X (\sigma\overline{\sigma})^{n-1}\alpha^2+(1-n)\left(\int_X\sigma^n\overline{\sigma}^{n-1}\alpha\right)\cdot \left(\int_X\sigma^{n-1}\overline{\sigma}^n\alpha\right),
\]
for any $\alpha \in H^2(X,\mathbf{C})$. The quadratic form $q_X$ is non-degenerate and is known as the Beauville-Bogomolov-Fujiki form (BBF form in what follows). Up to a rescaling, $q_X$ takes integer values and is primitive on $H^2(X,\mathbf{Z})$ (for us $q_X$ will always be integer-valued). Furthermore, there exists a positive constant $c_X \in \mathbf{R}^{>0}$, known as Fujiki constant, such that 
\begin{equation}\label{Fujikirelation}
c_X \cdot q_X(\alpha)^n= \int_X \alpha^{2n},
\end{equation} 
for all $\alpha \in H^2(X,\mathbf{C})$ (see for example \cite[Proposition 23.14]{Joyce}). Formula  (\ref{Fujikirelation}) is known as \textit{Fujiki relation}.
\vspace{0.2cm} 

The BBF form shares many properties with the usual intersection form on a surface. Below are those that will be mostly used:
\vspace{0.2cm}

$\bullet$ $q_X(\alpha)>0$ for any K\"{a}hler class $\alpha$ on $X$ (\cite[1.10]{Huy1}).

$\bullet$ If $\alpha$ is K\"{a}hler, $q_X(\alpha,N)>0$ for any non-zero effective divisor $N$ on $X$ (\cite[1.11]{Huy1}).

$\bullet$ $q_X(E,E')\geq 0$ for any couple of distinct prime divisors on $X$ (\cite[Proposition 4.2, item (ii)]{Bouck}).
\vspace{0.2cm}

Note that $q_X$ is invariant under deformation and its signature on $H^2(X,\mathbf{R})$ is $\left(3,b_2(X)-3\right)$, while, when $X$ is projective, its signature on $N^1(X)_{\mathbf{R}}$ is $(1,\rho(X)-1)$, where $\rho(X)$ is the Picard number of $X$.
\vspace{0.2cm}

In the case of projective IHS manifolds, there exists a decomposition of the big cone into locally rational polyhedral subcones (see Definition \ref{locratpol}), called Boucksom-Zariski chambers (see \cite[Theorem 1.6]{Den}).\ This decomposition is analogous to that for the big cone of smooth projective surfaces, provided by Bauer-Küronya-Szemberg in \cite{Bau}, and explains how the divisorial Zariski decomposition of a big divisor changes in the big cone when the chosen divisor class is perturbed. For the definition of the divisorial Zariski decomposition of any pseudo-effective divisor on a projective IHS manifold, we refer the reader to Subsection \ref{divZardecomp}. Using the mentioned decomposition of the big cone and the BBF pairing, we can associate with any big divisor a polygon behaving like the Newton-Okounkov body of a big divisor on a smooth complex projective surface (computed with respect to a general admissible flag). In particular, let $X$ be a projective IHS manifold, $D$ a big divisor on $X$, $E$ a prime divisor on $X$ and $D=P(D)+N(D)$ the divisorial Zariski decomposition of $D$. Consider the subset of $\mathbf{R}^2$
\[
\Delta_{E}^{\mathrm{num}}(D):=\left\{(t,y)\in \mathbf{R}^2\; | \; 0\leq t\leq \mu_E(D), \; 0\leq y \leq q_X(P(D_t),E)\right\},
\]
where $D_t:=D-tE$ and $\mu_E(D):=\mathrm{sup}\{t\in \mathbf{R}^{\geq 0} \;|\; D-tE \text{ is pseudo-effective}\}$.

\begin{maintheorem}\label{thm:1}
The subset $\Delta_E^{\mathrm{num}}(D)$ of $\mathbf{R}^2$ introduced above is a convex polygon. Moreover, $\mathrm{vol}_{\mathbf{R}^2}(\Delta_E^{\mathrm{num}}(D))=q_X(P(D))/2$, where $\mathrm{vol}_{\mathbf{R}^2}(\Delta_E^{\mathrm{num}}(D))$ is the Euclidean volume of the polygon $\Delta_E^{\mathrm{num}}(D)$. In particular, we have
\[
2^nc_X(\mathrm{vol}_{\mathbf{R}^2}(\Delta^{\mathrm{num}}_E(D))^n=c_X(q_X(P(D)))^n=\mathrm{vol}_X(D)=(2n)!\cdot \mathrm{vol}_{\mathbf{R}^{2n}}(\Delta(D)),
\] 
where $\Delta(D)$ is the classical Newton Okounkov body of $D$, with respect to any admissible flag.
\end{maintheorem}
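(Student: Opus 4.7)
The plan is to mirror the surface-case argument of \cite{Mustata} and \cite{Kuronya1} recalled in the introduction, replacing the Zariski chamber decomposition of the big cone by the Boucksom-Zariski chamber decomposition of \cite{Den} and the intersection product by the BBF form $q_X$.

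As $t$ varies in $[0,\mu_E(D)]$, the class $D_t=D-tE$ traces a line segment inside $\EffX$ which, by the local rational polyhedrality of the Boucksom-Zariski decomposition, meets only finitely many chambers. Within each chamber the support $\{F_1,\dots,F_k\}$ of $N(D_t)$ is constant, and the coefficients $b_j(t)$ are determined by the orthogonality equations $q_X(P(D_t),F_j)=0$; this system has a unique solution since the Gram matrix $(q_X(F_i,F_j))$ is negative definite, so $P(D_t)=D-tE-\sum_j b_j(t) F_j$ is affine linear in $t$ inside the chamber and the upper boundary $h(t):=q_X(P(D_t),E)$ of $\Delta^{\mathrm{num}}_E(D)$ is continuous and piecewise affine on $[0,\mu_E(D)]$. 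To obtain the convex polygon structure, I would verify that $h$ is concave: at each wall where a new prime $F$ enters the support, comparing the two affine expressions and using $q_X(F,E)\geq 0$ for $F\ne E$, together with the observation that once $E$ itself enters the support it must remain in it for all larger $t$ (so $h\equiv 0$ from that point on), yields a non-positive slope jump.

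For the area identity, the key observation is the derivative formula
\[
\frac{d}{dt} q_X(P(D_t)) \;=\; -2\,q_X(P(D_t),E),
\]
valid inside each chamber: differentiating $P(D_t)=D-tE-\sum_j b_j(t)F_j$ gives $P'(D_t)=-E-\sum_j b_j'(t)F_j$, and pairing with $P(D_t)$ kills every $F_j$-term because $q_X(P(D_t),F_j)=0$ on the chamber. Since $h$ is continuous, $g(t):=q_X(P(D_t))$ is $C^1$ on $[0,\mu_E(D)]$, and integrating yields
\[
\mathrm{vol}_{\mathbf{R}^2}(\Delta^{\mathrm{num}}_E(D))=\int_0^{\mu_E(D)} h(t)\,dt = \tfrac{1}{2}\bigl(q_X(P(D))-q_X(P(D_{\mu_E(D)}))\bigr)=\tfrac{1}{2}q_X(P(D)),
\]
where the vanishing at the endpoint uses that $D_{\mu_E(D)}$ lies on the boundary of $\EffX$, so by continuity of the volume function and the Fujiki relation $c_X\,q_X(P(D_{\mu_E(D)}))^n=\mathrm{vol}_X(D_{\mu_E(D)})=0$, which forces $q_X(P(D_{\mu_E(D)}))=0$.

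Finally, raising the area identity to the $n$-th power, multiplying by $c_X$, and invoking the Fujiki relation together with $\mathrm{vol}_X(D)=\mathrm{vol}_X(P(D))$ yields the first two equalities of the chain; the last equality is \cite[Theorem A]{Mustata} applied to $X$. I expect the main obstacle to be the concavity step: in the surface setting the slope jump at a wall reduces to the single non-negative intersection $C'\cdot C$, whereas on an IHS manifold introducing a new prime $F$ to the support of $N(D_t)$ shifts all the coefficients $b_j(t)$ through the orthogonality system, and one must verify that the combined effect, paired with $E$ through the BBF form, is still non-positive. The signature of $q_X$ on $N^1(X,\mathbf{R})$ and the negative definiteness of the Gram matrix of prime exceptional divisors are what should make this computation go through.
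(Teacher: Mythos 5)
Your area computation is correct and in fact takes a genuinely different (and arguably more elementary) route than the paper: instead of identifying $\int [P(D_t)]^{2n-1}[E]$ with the restricted volume $\mathrm{vol}_{X|E}(P(D_t))$ and invoking the differentiability of the volume function as in the paper, you differentiate $q_X(P(D_t))$ chamber by chamber and use orthogonality of $P(D_t)$ to the components of $N(D_t)$ to get $\tfrac{d}{dt}q_X(P(D_t))=-2\,q_X(P(D_t),E)$; together with the vanishing of $\lim_{t\to\mu_E(D)^-}q_X(P(D_t))$ (which, as you say, follows from continuity of $\mathrm{vol}_X$ and the identity $c_Xq_X(P(D_t))^n=\mathrm{vol}_X(D_t)$ for $t<\mu_E(D)$, or more directly from the fact that a non-big movable class has $q_X$-square zero), this gives $\mathrm{vol}_{\mathbf{R}^2}(\Delta_E^{\mathrm{num}}(D))=q_X(P(D))/2$ without ever mentioning restricted volumes. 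However, the argument as written has two genuine gaps. First, the convexity of $\Delta_E^{\mathrm{num}}(D)$, i.e.\ the concavity of $h(t)=q_X(P(D_t),E)$, is exactly the step you leave as an ``expected obstacle'': your wall-crossing slope computation is never carried out, and it is not obvious how the simultaneous shift of all coefficients $b_j(t)$ at a wall pairs with $E$. The paper avoids this local analysis altogether by a global superadditivity argument: for $D_1,D_2$ with $E\not\subset\mathbf{B}_+(D_i)$ one has $P(D_1+D_2)\geq P(D_1)+P(D_2)$ (maximality of the $q_X$-nef subdivisor), the difference is an effective divisor whose support does not contain $E$, and $q_X(F,E)\geq 0$ for any prime $F\neq E$; hence $q_X(P(D_1+D_2),E)\geq q_X(P(D_1),E)+q_X(P(D_2),E)$, which gives concavity of $h$ along the segment. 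This is the missing idea you need.

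Second, your finiteness claim (``the segment meets only finitely many chambers'') does not follow from local rational polyhedrality alone: the Boucksom--Zariski decomposition is locally finite only inside the open big cone, while the endpoint $D_{\mu_E(D)}$ lies on the boundary, so a priori infinitely many chambers could accumulate as $t\to\mu_E(D)$; without finiteness you get neither the polygon statement nor the globally piecewise-affine structure your derivative argument implicitly uses on all of $[0,\mu_E(D)]$. The paper closes this by showing that $s\mapsto N(D-(\mu_E(D)-s)E)$ is monotone and that $E$ never enters its support, so the negative supports along the segment are nested and only finitely many chambers occur. Relatedly, your remark that ``once $E$ enters the support of $N(D_t)$ it remains there and $h\equiv 0$ afterwards'' does not reflect the actual situation: under the standing hypothesis $E\not\subset\mathbf{B}_-(D)$ (built into the construction of $\Delta_E^{\mathrm{num}}(D)$), $E$ is never a component of $N(D_t)$ for $t<\mu_E(D)$, and indeed $q_X(P(D_t),E)>0$ on the open interval; so this scenario cannot be used to rescue concavity near the right endpoint. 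With the superadditivity argument and the monotonicity/finiteness argument supplied, your proof of the volume identity and of the chain of equalities (via the Fujiki relation, $\mathrm{vol}_X(D)=\mathrm{vol}_X(P(D))$, and Lazarsfeld--Musta\c{t}\u{a} for the classical Newton--Okounkov body) is complete.
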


As in the case of the classical Newton-Okounkov bodies (see \cite[Theorem B]{Mustata}), we show the existence of a convex cone of dimension $\rho(X)+2$ (which means that it belongs to $\mathbf{R}^{\rho(X)+2}$ and its interior is non-empty), whose "slices" are the polygons $\Delta_E^{\mathrm{num}}(D)$ in the statement above.

\begin{maintheorem}\label{thm:2}
For any prime divisor $E$ on a projective IHS manifold $X$ there exists a convex cone $\Delta^{\mathrm{num}}_E(X) \subset N^1(X)_{\mathbf{R}} \times \mathbf{R}^2$, together with the natural projection $ p_1 \colon \Delta^{\mathrm{num}}_E(X) \to N^1(X)_{\mathbf{R}}$, such that the fibre $p_1^{-1}(\zeta)$ is equal to the polygon $\Delta^{\mathrm{num}}_E(\zeta)$, for all $\zeta$ in $\mathrm{Big}(X)$.
\end{maintheorem}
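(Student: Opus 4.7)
My plan is to build $\Delta_E^{\mathrm{num}}(X)$ as the tautological total space of the polygons $\Delta_E^{\mathrm{num}}(\zeta)$ as $\zeta$ varies over the big cone, namely
\[
\Delta_E^{\mathrm{num}}(X) := \overline{\bigl\{(\zeta, t, y)\in N^1(X)_{\mathbf{R}}\times\mathbf{R}^{2} : \zeta\in\mathrm{Big}(X),\ (t,y)\in\Delta_E^{\mathrm{num}}(\zeta)\bigr\}},
\]
with $p_1$ the projection onto the first factor. Once convexity is established, the fibre identification $p_1^{-1}(\zeta)=\Delta_E^{\mathrm{num}}(\zeta)$ for $\zeta\in\mathrm{Big}(X)$ will be automatic from the openness of $\mathrm{Big}(X)$ and the continuity of the divisorial Zariski decomposition on the big cone, used already in the proof of Theorem~\ref{thm:1}. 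The cone property is immediate: for $\lambda>0$ one has $\mu_E(\lambda\zeta)=\lambda\,\mu_E(\zeta)$, $P(\lambda D)=\lambda\,P(D)$, and $q_X$ is bilinear, so $\Delta_E^{\mathrm{num}}(\lambda\zeta)=\lambda\,\Delta_E^{\mathrm{num}}(\zeta)$; non-emptiness of the interior follows from the openness of $\mathrm{Big}(X)$ and the positivity of $q_X(P(\zeta))/2$ granted by Theorem~\ref{thm:1}.

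The substance is convexity, which after passing to the homogeneous formulation reduces to two super-additivity statements for big $\zeta_1,\zeta_2$ and $0\le t_i\le\mu_E(\zeta_i)$:
\[
\mu_E(\zeta_1+\zeta_2)\ge \mu_E(\zeta_1)+\mu_E(\zeta_2)
\]
and, writing $D_i:=\zeta_i-t_iE$,
\[
q_X\bigl(P(D_1),E\bigr)+q_X\bigl(P(D_2),E\bigr)\le q_X\bigl(P(D_1+D_2),E\bigr).
\]
The first is a direct consequence of the convexity of $\overline{\mathrm{Eff}(X)}$. For the second, I use
\[
P(D_1+D_2)-P(D_1)-P(D_2)=N(D_1)+N(D_2)-N(D_1+D_2),
\]
which is an effective $\mathbf{R}$-divisor by the sub-additivity of the divisorial valuations $\sigma_F$ controlling the Zariski decomposition. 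Expanding it as $\sum_F b_F F$, every contribution $b_F\,q_X(F,E)$ with $F\neq E$ is non-negative by the property $q_X(F,F')\ge 0$ for distinct primes recalled in the introduction.

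The delicate term is $b_E\,q_X(E,E)$, which can a priori be negative when $E$ is a rigid prime with $q_X(E,E)<0$; this is the main obstacle. I plan to handle it via the Boucksom--Zariski chamber decomposition of $\mathrm{Big}(X)$ (cf.\ \cite[Theorem 1.6]{Den}): inside each chamber the map $D\mapsto P(D)$ is linear and the support of $N(D)$ is constant, so whether $E$ occurs in $N(D_1),N(D_2)$ or $N(D_1+D_2)$ depends only on the chambers containing these three points. A short case analysis, together with the orthogonality $q_X(P(D),E)=0$ whenever $E\subset\mathrm{Supp}(N(D))$, forces $b_E=0$ in the only configuration where $q_X(E,E)<0$ could create trouble. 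With that bookkeeping completed, the super-additivity holds in full generality, the set defined above is a convex cone, and its fibre over any big $\zeta$ is exactly $\Delta_E^{\mathrm{num}}(\zeta)$, as required.
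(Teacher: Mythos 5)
Your overall architecture coincides with the paper's (a total space fibred in the polygons, the cone property by homogeneity, convexity reduced to superadditivity of $\mu_E$ and of the height $\zeta\mapsto q_X(P(\zeta),E)$, with the components $F\neq E$ of the correction term handled by $q_X(F,E)\geq 0$ for distinct primes), but the one genuinely delicate point is exactly where your argument stops being a proof. You only assert that a Boucksom--Zariski chamber case analysis ``forces $b_E=0$''; no such analysis is given, and in the generality in which you set things up (arbitrary big $\zeta_1,\zeta_2$, arbitrary $0\leq t_i\leq \mu_E(\zeta_i)$, with the fibre over $\zeta$ given by the formula of Construction \ref{construction1} irrespective of whether $E\subset\mathbf{B}_-(\zeta)$) the needed superadditivity is simply false. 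Indeed, let $E$ be a prime exceptional divisor, $\alpha$ an ample class and $0<\epsilon\ll 1$; put $\zeta_1=[E]+\epsilon\alpha$ and $\zeta_2=\epsilon\alpha$. Since $q_X(\zeta_1,E)<0$, the divisor $E$ is a component of $N(\zeta_1)$, so $q_X(P(\zeta_1),E)=0$ by the orthogonality in Theorem \ref{DivZarDec}, while $q_X(P(\zeta_2),E)=\epsilon\, q_X(\alpha,E)>0$; for $\epsilon$ small, $E$ is still a component of $N(\zeta_1+\zeta_2)$, hence $q_X(P(\zeta_1+\zeta_2),E)=0<q_X(P(\zeta_1),E)+q_X(P(\zeta_2),E)$. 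Thus at $t_1=t_2=0$ the sum of the fibre points $(\zeta_1,0,0)$ and $(\zeta_2,0,\epsilon q_X(\alpha,E))$ does not lie over $\zeta_1+\zeta_2$: here the coefficient $b_E$ is strictly positive and really does destroy convexity, so no bookkeeping can force $b_E=0$ in this setting.

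The cure is not a chamber analysis but a different definition of the fibres, and it is what the paper does: the fibre over $\zeta$ consists of the pairs $(t,y)$ with $\nu_E(\zeta)\leq t\leq\mu_E(\zeta)$ and $0\leq y\leq q_X(P(\zeta_t),E)$, where $\nu_E(\zeta)$ is the coefficient of $E$ in $N(\zeta)$ (the translation convention of the remark following Construction \ref{construction1}; using the untranslated polygon of $\zeta-\nu_E(\zeta)E$ as the fibre fails for the same reason, so the shift in the $t$-coordinate is essential). On that range of $t$ one has $E\not\subset\mathrm{Supp}(N(\zeta_t))$ (Lemma \ref{rmk1} and the argument of Proposition \ref{prop:bodiesarepolygons}), so in your identity the coefficient of $E$ in $N(D_1)+N(D_2)-N(D_1+D_2)$ is at most $0$, and it is at least $0$ because that difference is effective; hence $b_E=0$ with no case analysis, and the remaining terms give superadditivity exactly as you argue. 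One must also check $\nu_E(\zeta+\zeta')\leq\nu_E(\zeta)+\nu_E(\zeta')$ (again effectivity of $N(\zeta)+N(\zeta')-N(\zeta+\zeta')$) so that the lower constraint survives addition; your bound $\mu_E(\zeta+\zeta')\geq\mu_E(\zeta)+\mu_E(\zeta')$ from convexity of $\overline{\mathrm{Eff}(X)}$ is fine and simpler than the paper's discussion of extremal faces. A last, minor point: defining the cone as a closure obliges you to verify that closing up does not enlarge the fibres over big classes at $t=\mu_E(\zeta)$, where the divisorial Zariski decomposition is no longer continuous; the paper avoids this by defining the total space directly by the inequalities above over $\mathrm{Eff}(X)$.
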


Following \cite{LSS14}, we show that under some assumptions, the polygons $\Delta_E^{\mathrm{num}}(\alpha)$ (with $\alpha$ a big rational class) can be expressed as a Minkowski sum of some polygons $\{\Delta_E^{\mathrm{num}}(\beta_i)\}_{i \in I}$, for some movable rational classes $\{\beta_i\}_{_ i\in I}$. More precisely, we have the following.

\begin{maintheorem}\label{thm:3}
Let $X$ be a projective IHS manifold with $\mathrm{Eff}(X)$ rational polyhedral and $E$ a big prime divisor. There exists a finite set $\Omega$ of movable $\mathbf{Q}$-divisors such that for any big $\mathbf{Q}$-divisor $D$ there exist rational numbers $\{\alpha_P(D)\}_{P\in \Omega}$ such that, $P(D)=\sum_{P \in \Omega}\alpha_P(D)P$ and, up to a translation, $\Delta_E^{\mathrm{num}}(D)=\sum_{P\in \Omega}\alpha_P(D)\Delta_E(P)$.
\end{maintheorem}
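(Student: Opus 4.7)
The plan is to transport to the IHS setting the Minkowski decomposition strategy of \cite{LSS14} for surfaces, replacing the classical Zariski chamber decomposition with the Boucksom-Zariski chamber decomposition of $\mathrm{Big}(X)$ from \cite[Theorem 1.6]{Den}. Under the hypothesis that $\mathrm{Eff}(X)$ is rational polyhedral, this decomposition consists of \emph{finitely many} rational polyhedral chambers $\Sigma_{1},\dots,\Sigma_{N}$ covering $\mathrm{Big}(X)$, and on each of them the divisorial Zariski decomposition $D=P(D)+N(D)$ is $\mathbf{Q}$-linear in $D$; this is the crucial combinatorial handle.

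For each $\Sigma_{i}$ I fix primitive rational generators $R_{i,1},\dots,R_{i,k_{i}}$ of its extremal rays and put
\[
\Omega:=\{P(R_{i,j}):\ 1\leq i\leq N,\ 1\leq j\leq k_{i}\}.
\]
Each element of $\Omega$ is a movable $\mathbf{Q}$-divisor (since its negative part vanishes by construction) and $\Omega$ is finite. For any effective $\mathbf{Q}$-divisor $D$ I pick the chamber $\Sigma_{i}$ containing $D$ and a non-negative rational expansion $D=\sum_{j}\beta_{j}R_{i,j}$; by the $\mathbf{Q}$-linearity of the positive part on $\Sigma_{i}$ one has $P(D)=\sum_{j}\beta_{j}P(R_{i,j})$, so setting $\alpha_{P}(D):=\beta_{j}$ when $P=P(R_{i,j})$ (and $0$ otherwise) delivers the required expression $P(D)=\sum_{P\in\Omega}\alpha_{P}(D)P$ with rational coefficients.

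For the Minkowski-sum identity I would use Theorem~\ref{thm:1}, which identifies each $\Delta_{E}^{\mathrm{num}}(\zeta)$ (for $\zeta$ big) with the subgraph of the concave piecewise linear function $f_{\zeta}(t):=q_{X}(P(\zeta-tE),E)$ on $[0,\mu_{E}(\zeta)]$; the Minkowski sum $\sum_{P}\alpha_{P}(D)\Delta_{E}^{\mathrm{num}}(P)$ is then encoded by the sup-convolution of the correspondingly scaled concave functions. The polygon $\Delta_{E}^{\mathrm{num}}(D)$ has an initial rectangular piece of width $\mathrm{ord}_{E}(N(D))$ on which $f_{D}$ is constant (because $P(D-tE)=P(D)$ for $t$ in this range), and beyond it $f_{D}(t)=f_{P(D)}(t-\mathrm{ord}_{E}(N(D)))$; thus up to a horizontal translation the claim reduces to the movable case $D=P(D)$.

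The main obstacle is establishing the identity $\Delta_{E}^{\mathrm{num}}(P(D))=\sum_{P}\alpha_{P}(D)\Delta_{E}^{\mathrm{num}}(P)$ for movable $D$. This requires matching wall-crossings: as $t$ grows, the line $t\mapsto P(D)-tE$ traverses a sequence of Boucksom-Zariski chambers, and one must verify that $f_{P(D)}$ is the sup-convolution of the scaled $f_{P}$'s. I expect this to follow from the $\mathbf{Q}$-linearity of $P$ inside each chamber and the bilinearity of $q_{X}$, reducing to a finite combinatorial check over the rational chamber structure; carrying out this matching carefully, and keeping the rationality of $\Omega$ under control, is the delicate step of the proof.
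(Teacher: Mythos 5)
Your reduction to the movable case (translating by the coefficient of $E$) is fine and matches Proposition \ref{translation}, and your expansion of $P(D)$ in the rational generators of the chamber containing it does give \emph{some} rational coefficients with $P(D)=\sum\alpha_P P$. But the heart of the theorem is the Minkowski identity, and this is precisely the step you leave as ``a finite combinatorial check I expect to follow from linearity''. With your choice of $\Omega$ and of coefficients it is not only unproven but false in general. First, your basis is not adapted to $E$, while this adaptation is what makes the argument work: the paper attaches to each Boucksom--Zariski chamber with exceptional support $S=\{E_1,\dots,E_k\}$ the rational ray $W_{S\cup\{E\}}\cap S^{\perp}$, i.e.\ an element $D_{\Sigma}=xE+\sum_i x_iE_i$ (movable, and big because $E$ is big), adds integral generators of the $q_X$-isotropic extremal rays of $\mathrm{Mov}(X)$, and then decomposes \emph{inductively}: with $\tau=\sup\{t\,|\,D-tD_{\Sigma}\ \text{movable}\}$ and $D'=D-\tau D_{\Sigma}$ one proves $P(D-tE)=P(D')+P(\tau D_{\Sigma}-tE)$ for $0\leq t\leq \tau x$ and $P(D-tE)=P\bigl(D'-(t-\tau x)E\bigr)$ afterwards, so that $\Delta_E^{\mathrm{num}}(D)=\Delta_E^{\mathrm{num}}(D')+\tau\Delta_E^{\mathrm{num}}(D_{\Sigma})$, and then recurses on $D'$, which lies on a face of smaller dimension. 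A one-shot expansion of $D$ in the extremal generators of the single chamber containing $D$ cannot record the sequence of chambers crossed by the ray $t\mapsto D-tE$, which is what the polygon encodes.

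Concretely, your identity already fails at the level of widths. After normalizing the translation, the horizontal extent of $\Delta_E^{\mathrm{num}}(\zeta)$ is $\mu_E(\zeta)$, so the width of $\sum_j\beta_j\Delta_E^{\mathrm{num}}(R_{i,j})$ is $\sum_j\beta_j\mu_E(R_{i,j})$; but $\mu_E$ is only superadditive, and, as observed in the proof of Theorem \ref{thm:2}, if the classes $R_{i,j}-\mu_E(R_{i,j})E$ do not lie on a common face of $\overline{\mathrm{Eff}(X)}$ then the corresponding sum minus $\bigl(\sum_j\beta_j\mu_E(R_{i,j})\bigr)E$ is still big, so $\mu_E(D)>\sum_j\beta_j\mu_E(R_{i,j})$ strictly and no translation repairs the discrepancy. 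Equivalently, by Theorem \ref{thm:1} the area of the left-hand side is $q_X(P(D))/2$, which contains the cross terms $q_X\bigl(P(R_{i,j}),P(R_{i,k})\bigr)$, whereas the area of the Minkowski sum contributes the mixed areas of the polygons $\Delta_E^{\mathrm{num}}(P(R_{i,j}))$; for a basis chosen independently of $E$ these have no reason to agree (for instance when some generator has $q_X$-square zero, so its polygon degenerates to a point or segment, while its $q_X$-pairing with the other generators is positive). The paper's adapted basis is engineered exactly so that these matchings hold, via the peeling induction above. A secondary point: you assert that $\mathrm{Eff}(X)$ rational polyhedral forces the chamber decomposition to consist of finitely many rational polyhedral cones; the paper only establishes local rational polyhedrality in general and, in Proposition \ref{generators}, adds polyhedrality of the chamber closures as a separate hypothesis, so if your argument uses a non-negative rational expansion in chamber generators, this too needs justification.
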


\begin{rmk}
We notice that the pseudo-effective cone of a projective IHS manifold  $X$ is rational polyhedral if and only if the set of prime exceptional divisors of $X$ is finite and non-empty when $\rho(X)\geq 3$ (see \cite[Theorem 1.2]{Den2}). If $\rho(X)=2$, $\overline{\mathrm{Eff}(X)}$ is rational polyhedral if and only if $\mathrm{Bir}(X)$ is finite (see \cite[Theorem 1.3, item (2)]{Ogu} and \cite[Corollary 1.6]{Den2}).
\end{rmk}

Newton-Okounkov bodies have attracted a great deal of interest in recent years. These objects contain a lot of asymptotic information about the divisors on projective varieties, but in general, they are not easy to compute. In addition, the dimension of classical Newton-Okounkov bodies is equal to the dimension of the variety. In the case of projective IHS manifolds, we have the classical Newton-Okounkov bodies and the polygons of Newton-Okounkov type in Theorem \ref{thm:1} above. These polygons are easier to compute (we present several such computations in Section \ref{examples}). We hope they can be used to obtain finer information on the positivity of divisors on projective IHS manifolds.

 \section*{Acknowledgements}
Firstly, I would like to thank Gianluca Pacienza and Giovanni Mongardi for all their support.\ Secondly, I would like to thank Sébastien Boucksom and Alex Küronya for some interesting discussions. Finally, I would like to thank the anonymous referee from the \q{Annali della Scuola Normale Superiore di Pisa - Classe di Scienze} for helping me to improve the exposition of the article.

\section*{Organisation of the paper}
\begin{enumerate}
\item In the second section we collect all the notions and known results needed.
\item In the third section we compute the restricted volume of big divisors on projective IHS manifolds along prime divisors.
\item In the fourth section, we associate with any big divisor on a projective IHS manifold a convex polygon. We say that these polygons are of \textit{Newton-Okounkov type}. Furthermore, we observe that these polygons fit in a convex cone as slices. To conclude, we observe that in some cases we can explicitly compute a set of generators for this convex cone.
\item In the fifth section, we investigate when the polygons defined in the fourth section can be expressed as Minkowski sums of simpler ones.
\item In the sixth section we give several examples illustrating our results.
\end{enumerate}
\vspace{0.6cm}

\section{Preliminaries}
Throughout this section, $Y$ will be a normal projective variety of dimension $d$. A prime divisor on $Y$ is a reduced and irreducible hypersurface. From now on, $X$ will denote a projective IHS manifold of complex dimension $2n$.
\vspace{0.2cm}

The following formula will be useful.

\begin{equation}\label{relation}
q_X(\alpha,\beta)\int\alpha^{2n}=q_X(\alpha)\int\alpha^{2n-1}\beta, \text{ for any } \alpha,\beta \in H^2(X,\mathbb{C})
\end{equation}
For the above formula, see for example \cite[Exercise 23.2]{Joyce}, but be careful, there is a typo in there.

\begin{defn}
We say that a prime divisor $E$ on $X$ is exceptional if $q_X(E)<0$.
 \end{defn}

  Recall that the N\'{e}ron-Severi space  of $Y$ can be defined as $N^1(Y)_{\mathbf{R}}:=N^1(Y) \otimes_{\mathbf{Z}} \mathbf{R}$, where $N^1(Y):=\mathrm{Div}(Y)/\equiv$ is the  N\'{e}ron-Severi group of $Y$, $\mathrm{Div}(Y)$ is the group of integral Cartier divisors of $Y$, and $\equiv$ is the numerical equivalence relation. A Cartier $\mathbf{R}$-divisor is a formal linear combination (with coefficients in $\mathbf{R}$) of integral Cartier divisors. The class of a Cartier divisor $D$ in $N^1(Y)_{\mathbf{R}}$ will be denoted by $[D]$.

 \begin{rmk}
 For an IHS manifold $X$, since $H^1(X,\mathscr{O}_X)=0$, the numerical and linear equivalence relations coincide.
 \end{rmk}
  
 \begin{defn}{}{}
 A Cartier $\mathbf{Q}$-divisor is big if there exists a positive integer $m$ such that $mD$ is a big integral Cartier divisor. If $D$ is any Cartier $\mathbf{R}$-divisor, we say that it is big if $D=\sum_ia_iD_i$, where $a_i$ is a positive real number and $D_i$ is a big integral Cartier divisor, for any $i$.
 \end{defn}
 
  An ample Cartier $\mathbf{R}$-divisor is a Cartier $\mathbf{R}$-divisor $D$ of the form $D=\sum_ia_iD_i$, where any $a_i$ is a positive real number, and any $D_i$ is an ample integral Cartier divisor. A Cartier $\mathbf{R}$-divisor $D$ on $Y$ is nef if $D \cdot C \geq 0$, for any irreducible and reduced curve $C$ on $Y$. A Cartier $\mathbf{R}$-divisor $D$ is effective if $D=\sum_ia_iD_i$, $a_i$ is a non-negative real number, and $D_i$ is a prime Cartier divisor, for any $i$. 
 \vspace{0.2cm}
 
 The ample cone $\mathrm{Amp}(Y)$ is the convex cone in $N^1(Y)_{\mathbf{R}}$ of ample Cartier $\mathbf{R}$-divisor classes. The nef cone $\mathrm{Nef}(Y)$ is the convex cone in $N^1(Y)_{\mathbf{R}}$ of nef Cartier $\mathbf{R}$-divisor classes. It is known that $\mathrm{Nef}(Y)=\overline{\mathrm{Amp}(Y)}$, and that $\mathrm{Amp}(Y)$ is the interior of  $\mathrm{Nef}(Y)$ (see Theorem 1.4.23 of \cite{Laz}).
 The big cone $\mathrm{Big}(Y)$ of $Y$ is the convex cone in $N^1(Y)_{\mathbf{R}}$ of big Cartier $\mathbf{R}$-divisor classes. The pseudo-effective cone of $Y$ is the closure of the big cone in $N^1(Y)_{\mathbf{R}}$. Moreover, the big cone is the interior of the pseudo-effective cone. The effective cone $\mathrm{Eff}(Y)$ of $Y$ is the convex cone in $N^1(Y)_{\mathbf{R}}$ spanned by effective Cartier $\mathbf{R}$-divisor classes. One can equivalently define the pseudo-effective cone of $Y$ as the closure $\overline{\mathrm{Eff}(Y)}$ of the effective cone in the Néron-Severi space. We have $\mathrm{Amp}(Y)\subset \mathrm{Big}(Y)$, and so $\mathrm{Nef}(Y)\subset \overline{\mathrm{Eff}(Y)}$.
  
  \begin{defn}
\begin{enumerate}
\item  A movable integral Cartier divisor on $Y$ is an effective integral Cartier divisor $D$ such that the linear series $|mD|$ has no divisorial components in its base locus, for any large enough and sufficiently divisible $m$.
\item We define the movable cone $\mathrm{Mov}(Y)$ in $N^1(Y)_{\mathbf{R}}$ as the convex cone generated by the movable integral divisor classes. 
\end{enumerate} 
\end{defn}

\begin{defn}
An $\mathbf{R}$-divisor $D$ on $X$ is $q_X$-nef if $q_X(D,E)\geq 0$ for any prime divisor $E$ on $X$.
\end{defn}

On projective IHS manifolds, $q_X$-nef divisor classes lie in $\overline{\mathrm{Mov}(X)}$. Vice versa, any class $\alpha$ in $\overline{\mathrm{Mov}(X)}$ is $q_X$-nef (see \cite[Lemma 2.7, Remark 2.10]{Den}). 
\vspace{0.2cm}

We now recall a definition concerned with the geometry of convex cones.

\begin{defn}\label{locratpol}
Let $K\subset \mathbf{R}^k$ be a closed convex cone with a non-empty interior. We say that $K$ is (rational) polyhedral if $K$ is generated by finitely many (rational) elements of $\mathbf{R}^k$. We say that $K$ is locally (rational) polyhedral at $v \in \partial K$ if $v$ has an open neighbourhood $U = U(v)$, such that $K \cap U$ is defined in $U$ by a finite number of (rational) linear inequalities. If $K$ is not closed, we say that it is locally (rational) polyhedral at $v \in \partial K \cap K$ if $\overline{K}$ is.
\end{defn}
  
\subsection{Asymptotic base loci}\label{asympbaseloci}
We recall the definition of the various asymptotic base loci and refer the reader to \cite{Ein} for a comprehensive treatment of the argument. The stable base locus of an integral Cartier divisor $D$ is defined as
\[
\mathbf{B}(D)=\bigcap_{k\geq 1}\mathrm{Bs}(|kD|),
\] 
where $\mathrm{Bs}(|kD|)$ is the base locus of the linear series $|kD|$. The stable base locus of a Cartier $\mathbf{Q}$-divisor $D$ on $Y$ is defined as $\mathbf{B}(mD)$, where $m$ is any positive integer such that $mD$ is integral.

\begin{defn}{}{}
Let $D$ be any Cartier $\mathbf{R}$-divisor on $Y$.
\begin{itemize}
\item 
The real stable base locus of $D$ is
\[
\mathbf{B}(D):=\bigcap\{\mathrm{Supp}(E)\;|\; E \text{ effective $\mathbf{R}$-divisor, $E \sim_{\mathbf{R}} D$ }\}.
\]
If $D$ is a $\mathbf{Q}$-divisor, the real stable base locus of $D$ coincides with the usual one (\cite[Proposition 1.1]{BBP2013}).

\item The augmented base locus of $D$ is
\[
\mathbf{B}_{+}(D):=\bigcap_{D=A+E}\mathrm{Supp}(E),
\]
where the intersection is taken over all the decompositions of the form $D=A+E$, where $A$ and $E$ are Cartier $\mathbf{R}$-divisors such that $A$ is ample and $E$ is effective.

\item The restricted base locus of $D$ on $Y$ is
\[
\mathbf{B}_{-}(D):=\bigcup_{A}\mathbf{B}(D+A),
\]
where the union is taken over all ample $\mathbf{R}$-divisors $A$. 
\end{itemize}
\end{defn}

If $D$ is any Cartier $\mathbf{R}$-divisor on $Y$, we have the inclusions $\mathbf{B}_-(D) \subset \mathbf{B}(D)\subset \mathbf{B}_+(D)$ (see \cite[Example 1.16]{Ein}, and \cite[Introduction]{BBP2013}).
\vspace{0.2cm}

\begin{rmk}
The augmented and restricted base loci on $Y$ can be studied in the Néron-Severi space because they do not depend on the representative of the class we choose.
\end{rmk}

  \subsection{Volumes}
 The volume of an integral big Cartier divisor $D$ on $Y$, which we defined in the introduction, measures the asymptotic rate of growth of the spaces of global sections $H^0(Y,\mathscr{O}_Y(mD))$. It is known that $\mathrm{vol}(D)>0$ if and only if $D$ is big. One defines the volume of a Cartier $\mathbf{Q}$-divisor $D$ by picking an integer $k$ such that $kD$ is integral, and by setting $\mathrm{vol}(D):=\frac{1}{k^{d}}\mathrm{vol}(kD)$. This definition does not depend on the integer $k$ we choose (see Lemma 2.2.38 on \cite{Laz}). One can extend the notion of volume to every Cartier $\mathbf{R}$-divisor. In particular, pick $D \in \mathrm{Div}_{\mathbf{R}}(Y)=\mathrm{Div}(Y)\otimes_{\mathbf{Z}} \mathbf{R}$ and let $\{D_k\}_k$ be a sequence of Cartier $\mathbf{Q}$-divisors converging to $D$ in $N^1(Y)_{\mathbf{R}}$. We define
\[
\mathrm{vol}(D):=\lim_{k \to \infty}\mathrm{vol}(D_k).
\]
This number is independent of the choice of the sequence $\{D_k\}_k$ (see \cite[Theorem 2.2.44]{Laz}). Moreover, two numerically equivalent Cartier divisors have the same volume (see \cite[Proposition 2.2.41]{Laz}), hence the volume can be studied in $N^1(Y)_{\mathbf{R}}$, and by \cite[Theorem 2.2.44 ]{Laz} it defines a continuous function $\mathrm{vol}_Y(-) \colon N^1(Y)_{\mathbf{R}} \to \mathbf{R}$, called the \textit{volume function}.
\vspace{0.2cm}

Now, let $V$ be a subvariety of $Y$ of dimension $m>0$, $D$ an integral Cartier divisor on $Y$. We have the natural restriction maps
\[
r_k\colon H^0(Y,\mathcal{O}_Y(kD))\to H^0(V,\mathcal{O}_V(kD)), 
\]
for any positive integer $k$. We define the restricted volume of $D$ to $V$ as 
\[
\mathrm{vol}_{X|V}(D):=\mathrm{lim sup}_{k \to \infty}\frac{\mathrm{rank}(r_k)}{k^m}.
\]
If $D$ is any Cartier $\mathbf{Q}$-divisor, one picks $k$ such that $kD$ is integral, and defines $\mathrm{vol}_{X|V}(D):=\frac{1}{k^m}\mathrm{vol}_{X|V}(kD)$. To conclude, the following result generalises the listed properties of the classical volumes to the case of restricted volumes.
\begin{thm}[Theorem A, \cite{Ein1}]
Let $V \subset X$ be a subvariety of dimension $m>0$ and let $D$ be a Cartier $\mathbf{Q}$-divisor such that $V \not \subset \mathbf{B}_+(D)$. Then $\mathrm{vol}_{X|V}(D)>0$ and $\mathrm{vol}_{X|V}(D)$ depends only on the numerical equivalence class of $D$. Furthermore, $\mathrm{vol}_{X|V}(D)$ varies continuously as a function of the numerical equivalence class of $D$, and it extends uniquely to a continuous function
\[
\mathrm{vol}_{X|V} \colon \mathrm{Big}^V(X)^+\to \mathbf{R}^{\geq 0},
\]
where $\mathrm{Big}^V(X)^+$ denotes the set of all Cartier divisor classes $\zeta$ such that $V \not \subset \mathbf{B}_+(\zeta)$.
\end{thm}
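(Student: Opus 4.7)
The plan is to establish the four claims---positivity on the locus where $V\not\subset \mathbf{B}_+(D)$, invariance under numerical equivalence, continuity, and the unique continuous extension to $\mathrm{Big}^V(X)^+$---first for $\mathbf{Q}$-divisors, and then pass to $\mathbf{R}$-divisors by continuity.

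For positivity, I would use the standard characterisation that $V\not\subset \mathbf{B}_+(D)$ is equivalent, after replacing $D$ by a positive multiple, to the existence of a decomposition $D\sim_{\mathbf{Q}} A+E$ with $A$ an ample $\mathbf{Q}$-divisor and $E$ an effective $\mathbf{Q}$-divisor whose support does not contain $V$. Clearing denominators, multiplication by the canonical section $s_{kE}$ gives an injection $H^0(X,\mathscr{O}_X(kA))\hookrightarrow H^0(X,\mathscr{O}_X(kD))$; since $s_{kE}|_V$ is a nonzero section of $\mathscr{O}_V(kE|_V)$, composing with $r_k$ has the same kernel as the restriction $H^0(X,\mathscr{O}_X(kA))\to H^0(V,\mathscr{O}_V(kA|_V))$. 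By Serre vanishing this restriction is surjective for $k\gg 0$, and since $A|_V$ is ample one obtains $\mathrm{rank}(r_k)\geq h^0(V,\mathscr{O}_V(kA|_V))\sim (A|_V)^{\dim V}\cdot k^{\dim V}/(\dim V)!$, hence $\mathrm{vol}_{X|V}(D)>0$.

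For numerical invariance, it is enough to verify that if $N$ is a Cartier divisor numerically equivalent to zero, then the asymptotic rank of $r_k$ for $k(D+N)$ coincides with that for $kD$. The numerically trivial line bundles form the bounded family $\mathrm{Pic}^{\tau}(X)$, an extension of a finite group by the abelian variety $\mathrm{Pic}^0(X)$; uniform Serre vanishing over this bounded family then yields $h^0\bigl(V,\mathscr{O}_V(kD|_V)\otimes L|_V\bigr)=h^0(V,\mathscr{O}_V(kD|_V))$ for all $L\in \mathrm{Pic}^0$ and $k\gg 0$, and the analogous statement on $X$ transfers to the restriction maps $r_k$. Thus $\mathrm{vol}_{X|V}$ descends to a well-defined function on $N^1(X)_{\mathbf{Q}}$.

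Continuity is the substantive part. My plan is to establish a Fujita-type approximation for restricted linear series: given $\varepsilon>0$ and a big $\mathbf{Q}$-divisor $D$ with $V\not\subset \mathbf{B}_+(D)$, produce a birational morphism $\mu\colon X'\to X$, isomorphic at the generic point of $V$, together with a decomposition $\mu^*D=A+E$ where $A$ is an ample $\mathbf{Q}$-divisor on $X'$ and $E$ is effective, such that $(A|_{V'})^{\dim V}>\mathrm{vol}_{X|V}(D)-\varepsilon$, where $V'$ denotes the strict transform of $V$. Combining this lower approximation with a matching upper bound coming from the sub-additivity of the restricted graded sections will yield continuity of $\mathrm{vol}_{X|V}$ in a neighbourhood of $[D]$ in $N^1(X)_{\mathbf{Q}}$; density of $\mathbf{Q}$-classes in $\mathrm{Big}^V(X)^+$ then forces a unique continuous extension to $\mathbf{R}$-divisor classes. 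The main obstacle is precisely this Fujita-type approximation: since $r_k$ is only a linear map and not a ring homomorphism, one must work over a sequence of blow-ups that keeps $V$ visible while controlling the defect between restricted sections on $X$ and complete sections on $V'$, and show that no definite portion of the volume is lost in the process.
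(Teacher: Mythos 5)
This statement is not proved in the paper at all: it is imported verbatim as Theorem A of the cited reference \cite{Ein1}, so there is no internal argument to compare yours against. Your outline does in fact reproduce the architecture of the original proof: positivity via a decomposition $D\sim_{\mathbf{Q}}A+E$ with $A$ ample and $V\not\subset\mathrm{Supp}(E)$, and continuity via a Fujita-type approximation for the restricted graded linear series. The positivity step is essentially complete as you state it (multiplication by $s_{kE}|_V$ is injective on sections because $V$ is integral and $s_{kE}|_V\neq 0$, and Serre vanishing gives surjectivity of $H^0(X,\mathscr{O}_X(kA))\to H^0(V,\mathscr{O}_V(kA|_V))$ for $k\gg 0$). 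But as a proof the proposal has a genuine gap, and you name it yourself: the entire analytic content of the theorem is concentrated in the Fujita-type approximation for restricted linear series --- equivalently, in the identification of $\mathrm{vol}_{X|V}(D)$ with the asymptotic moving intersection number $\|D^{\dim V}\cdot V\|$, which is what makes numerical invariance and continuity fall out simultaneously --- and you only state this as a plan, correctly flagging that the obstruction is that $r_k$ is a linear map and not a ring homomorphism. Until that approximation is established, neither the continuity nor the unique extension to $\mathrm{Big}^V(X)^+$ is proved.

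Two further points. First, your argument for numerical invariance does not work as stated: Serre vanishing, even uniformly over the bounded family $\mathrm{Pic}^{\tau}(X)$, concerns higher cohomology of twists by large multiples of an \emph{ample} divisor and does not yield $h^0\bigl(V,\mathscr{O}_V(kD|_V)\otimes L|_V\bigr)=h^0\bigl(V,\mathscr{O}_V(kD|_V)\bigr)$ for a merely big $D$; individual $h^0$'s of big divisors twisted by numerically trivial bundles can genuinely differ, and only their asymptotics agree --- which is the kind of statement you are trying to prove, not one you may assume. Moreover, what must be controlled is the rank of the restriction map $r_k$, not the dimension of $H^0$ on $V$; in \cite{Ein1} numerical invariance is deduced from the same intersection-theoretic description of the restricted volume rather than from a boundedness argument. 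Second, for the IHS manifolds of this paper the issue is moot ($X$ is simply connected, so numerical and linear equivalence coincide), but the theorem as quoted is general, and your proof of it is incomplete precisely at its central step.
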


\subsection{Divisorial Zariski decompositions}\label{divZardecomp}
On any compact complex manifold we have the divisorial Zariski decomposition, which was introduced by Boucksom in his fundamental paper \cite{Bouck} (but see also Nakayama's construction, \cite{Nakayama}), and is a generalisation of the classical Zariski decomposition on surfaces. The divisorial Zariski decomposition plays a central role in this paper, and in the case of IHS manifolds Boucksom characterised it with respect to the BBF form. A divisorial Zariski decomposition exists for arbitrary pseudo-effective analytic classes on compact complex manifolds, but as in this paper we deal with projective IHS manifolds, we recall what is a (actually, the) divisorial Zariski decomposition only for pseudo-effective divisors on projective IHS manifolds.

\begin{thm}[Theorem 4.8, \cite{Bouck}]\label{DivZarDec}
Let $D$ be a pseudo-effective $\mathbf{R}$-divisor on $X$ projective. Then $D$ admits a divisorial Zariski decomposition, i.e.\ we can write $D=P(D)+N(D)$ in a unique way such that:
\begin{enumerate}
\item $P(D) \in \overline{\mathrm{Mov}(X)}$,
\item $N(D)$ is an effective $\mathbf{R}$-divisor. If non-zero, $N(D)$ is exceptional, which means that the Gram matrix (with respect to $q_X$) of its irreducible components is negative definite, 
\item $P(D)$ is orthogonal (with respect to $q_X$) to any irreducible component of $N(D)$.
\end{enumerate}
The divisor $P(D)$ (resp.\ $N(D)$) is the \textit{positive part} (resp.\ \textit{negative part}) of $D$.
\end{thm}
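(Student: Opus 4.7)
The plan is to construct $P(D)$ and $N(D)$ explicitly via minimal asymptotic multiplicities along prime divisors, verify properties (1)--(3), and then deduce uniqueness from a BBF-pairing computation.

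For the construction I would first observe that in the IHS setting the candidates for components of $N(D)$ are tightly constrained: any prime divisor $E$ with $q_X(E)\geq 0$ satisfies $q_X(E,E')\geq 0$ for every prime $E'$ (using the listed properties of $q_X$), so $E$ is $q_X$-nef and therefore lies in $\overline{\mathrm{Mov}(X)}$ by the cited characterization from \cite{Den}; such an $E$ should be absorbed into the positive part. For every prime exceptional divisor $E$ (i.e.\ $q_X(E)<0$) define
\[
\nu_E(D):=\inf\bigl\{\mathrm{mult}_E(D'): D'\geq 0,\ D'\equiv D\bigr\}.
\]
The core structural input is a finiteness-and-negative-definiteness lemma: for any pseudo-effective $D$, only finitely many exceptional primes $E$ satisfy $\nu_E(D)>0$, and any finite family of distinct prime exceptional divisors appearing together in some effective $\mathbf{R}$-divisor representing a pseudo-effective class has negative-definite Gram matrix with respect to $q_X$. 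This combines $q_X(E_i)<0$, $q_X(E_i,E_j)\geq 0$ for $i\neq j$, strict positivity of the pairing against an ample class, and the signature $(1,\rho(X)-1)$ of $q_X$ on $N^1(X)_{\mathbf{R}}$. Granting this lemma, set $N(D):=\sum_E \nu_E(D)E$ and $P(D):=D-N(D)$.

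By construction $N(D)$ satisfies (2). For (3), I would run a mass-exchange argument: by minimality of each $\nu_{E_i}(D)$, no infinitesimal transfer of $E_i$ from $N$ to $P$ (or vice versa) can keep $D$ representable by an effective class while strictly decreasing the multiplicity along $E_i$; coupled with the strict positivity of $q_X$ against ample classes, this forces $q_X(P(D),E_i)=0$ for every component $E_i$ of $N(D)$. The same perturbation argument shows $q_X(P(D),E)\geq 0$ for every prime $E$ (so $P(D)$ is $q_X$-nef), and then (1) follows via the cited criterion that $q_X$-nef divisors lie in $\overline{\mathrm{Mov}(X)}$.

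For uniqueness, assume $D=P+N=P'+N'$ are two decompositions satisfying (1)--(3) and set $Q:=P-P'=N'-N$. Then $Q$ is supported on exceptional primes (those appearing in $N$ or $N'$), so the negative-definiteness lemma gives $q_X(Q)\leq 0$ with equality iff $Q=0$. On the other hand, bilinearity of $q_X$ combined with (3) for both decompositions yields
\[
q_X(Q)=q_X(P-P',N'-N)=q_X(P,N')+q_X(P',N),
\]
and each term on the right is nonnegative because $P,P'\in\overline{\mathrm{Mov}(X)}$ are $q_X$-nef and $N,N'$ are effective. Hence $q_X(Q)=0$, forcing $Q=0$, so $P=P'$ and $N=N'$. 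The main obstacle is the finiteness-and-negative-definiteness lemma underlying the construction: the hyperbolic signature $(1,\rho(X)-1)$ of $q_X$ does not a priori exclude arbitrarily large pairwise non-obtuse families of negative-self-paired classes, so one must crucially exploit the geometric constraint that the $E_i$ coexist in a common effective representative of a pseudo-effective class to force linear independence and negative-definiteness. Once this is in hand, the rest of the argument is a direct transposition of the classical surface Zariski decomposition, with the BBF form replacing the intersection form.
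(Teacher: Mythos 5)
First, a point of reference: the paper does not prove this statement at all --- it is quoted from Boucksom (\cite{Bouck}, Theorem 4.8) and used as a black box --- so your attempt has to be measured against Boucksom's original argument. Your overall architecture (negative part built from minimal multiplicities along prime exceptional divisors, positive part shown to be $q_X$-nef and hence in $\overline{\mathrm{Mov}(X)}$, uniqueness via a BBF-pairing computation) is indeed the architecture of that proof. The genuine gap is in the key lemma you isolate, and it is not merely a missing verification: the lemma is false as stated. Coexistence of prime exceptional divisors in a single effective representative of a pseudo-effective class does \emph{not} force negative definiteness of their Gram matrix. Already on a K3 surface, two smooth rational curves $C_1,C_2$ with $C_1^2=C_2^2=-2$ and $C_1\cdot C_2=2$ coexist in the effective (in fact nef) divisor $C_1+C_2$, yet $(C_1+C_2)^2=0$, so the pair is only negative semi-definite. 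What actually forces negative definiteness of $\mathrm{Supp}(N(D))$ is the strictly stronger property that each component has \emph{positive} minimal multiplicity, i.e.\ occurs in every effective representative --- equivalently, that no nonzero effective combination of these components is $q_X$-nef; combined with the signature $(1,\rho(X)-1)$ this is Boucksom's notion of an exceptional family. So the constraint you propose to ``crucially exploit'' (coexistence in one representative) is the wrong one, and your mechanism for the lemma would not close.

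The same defect propagates into your uniqueness argument: to conclude $q_X(Q)\le 0$ with equality iff $Q=0$ for $Q=N'-N$, you need the Gram matrix of the \emph{union} $\mathrm{Supp}(N)\cup\mathrm{Supp}(N')$ to be negative definite, and this does not follow from negative definiteness of each family separately (the two curves above are each exceptional singletons whose union is not an exceptional pair). The standard repair is to write $N'-N=A-B$ with $A,B$ effective and without common components, observe $q_X(A)=q_X(A,Q)+q_X(A,B)\ge 0$ (using $q_X(P',A)=0$ from property (3), $q_X(P,A)\ge 0$ from $q_X$-nefness, and $q_X(A,B)\ge 0$ for effective divisors with disjoint components), and then use that $A$ is supported on the exceptional family underlying $N'$ alone to force $A=0$; symmetrically $B=0$. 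A further minor point: for $D$ pseudo-effective but not effective your infimum is taken over the empty set, so one must define $\nu_E(D)$ as $\lim_{\epsilon\to 0^+}\nu_E(D+\epsilon A)$ for $A$ ample, as Boucksom and Nakayama do. With the lemma correctly reformulated and the uniqueness step repaired as above, the rest of your outline does transpose the classical surface argument correctly.
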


\begin{rmk}\label{Rem:zardecomp}
 If the divisor $D$ is effective, its positive part can be defined as the maximal $q_X$-nef subdivisor of $D$. This means that, if $D=\sum_ia_iD_i$, where the $D_i's$ are the irreducible components of $D$, the positive part of $D$ is the maximal divisor $P(D)=\sum_ib_iD_i$, with respect to the relation "$P(D)\leq D$ if and only if $b_i \leq a_i$ for every $i$", such that $q_X(P(D),E)\geq 0$, for any prime divisor $E$. For a proof of this fact see for example the discussion after \cite[Corollary 4.3]{BCK09}, and the proof of \cite[Theorem 3.6]{KMPP19}.
\end{rmk}

\begin{rmk} The divisorial Zariski decomposition is rational, namely, under the assumptions of Theorem \ref{DivZarDec}, whenever $D$ is a pseudo-effective $\mathbf{Q}$-divisor, $P(D)$ and $N(D)$ are $\mathbf{Q}$-divisors.
\end{rmk}

\begin{rmk}\label{rmk3} We note that the divisorial Zariski decomposition behaves well with respect to the numerical equivalence relation. Indeed, let $X$ be a projective IHS manifold and $\alpha$ a pseudo-effective class in $N^1(X)_{\mathbf{R}}$. Suppose $\alpha = [D_1] = [D_2]$, where $D_1,D_2$ are pseudo-effective $\mathbf{R}$-divisors. Write $D_i=P(D_i)+N(D_i)$ for the divisorial Zariski decomposition of $D_i$. Then $D_1-D_2=T$, where $T$ is numerically trivial. Thus 
\[
P(D_1)+N(D_1)=(P(D_2)+T)+N(D_2),
\] 
and by the uniqueness of the divisorial Zariski decomposition, we have $P(D_1)=P(D_2)+T$, because $[P(D_2)+T]$ is $q_X$-nef, $q_X$-orthogonal to $N(D_1)$ and $N(D_1)$ is exceptional. In particular, $[P(D_1)]=[P(D_2)]$ and $N(D_1)=N(D_2)$.
\end{rmk}
 
 The positive part of the divisorial Zariski decomposition of a big divisor encodes most of the positivity of the divisor itself. 
Indeed, given any big $\mathbf{Q}$-divisor $D$ on $X$, one has $H^0(X,mD)\cong H^0(X,mP(D)) $, for $m$ positive and sufficiently divisible (\cite[Theorem 5.5]{Bouck}), and also $\mathbf{B}_+(D)=\mathbf{B}_+(P(D))$, $\mathbf{B}_-(D)=\mathbf{B}_-(P(D))\cup \mathrm{Supp}(N(D))$  (see \cite[Proposition 2.1, Proposition 2.4]{DO23}). Furthermore, $D$ is big if and only if $P(D)$ is (see \cite[Proposition 3.8]{Bouck} or \cite[Proposition 3.8]{KMPP19}).

\section{Restricting volumes along prime divisors}
In this section, we use the formula (\ref{relation}) to compute the restricted volume of a big $\mathbf{R}$-divisor on $X$ along a prime divisor $E$. We first recall the following definition.

\begin{defn}
A small $\mathbf{Q}$-factorial modification of a normal projective variety $Y$ is a birational map $g\colon Y \DashedArrow Y'$ where $Y'$ is normal, projective, and $\mathbf{Q}$-factorial, and $g$ is an isomorphism in codimension 1.
\end{defn}

\begin{lem}\label{lem1}
Let $E$ be a prime divisor on a normal projective variety $Y$, and $f\colon Y \DashedArrow Y'$ a small $\mathbf{Q}$-factorial modification of $Y$. Suppose that, given a Cartier divisor $D$ on $Y$, the Weil divisor $D'=f_{*}(D)$ is Cartier. Furthermore, let $E'=f_{*}(E)$ be the strict tranform of $E$ via $f$. Then the complex vector spaces \[\mathrm{im}(H^0(Y,\mathcal{O}_Y(mD))\to H^0(E,\mathcal{O}_E(mD))),\; \mathrm{im}(H^0(Y',\mathcal{O}_{Y'}(mD'))\to H^0(E',\mathcal{O}_{E'}(mD')))\] have the same dimension, for any $m$, so that $\mathrm{vol}_{X|E}(D)=\mathrm{vol}_{X'|E'}(D')$.
\end{lem}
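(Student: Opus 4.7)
The plan is to reduce the equality of restricted volumes to the level-$m$ statement that the images of the restriction maps
\[
r_m\colon H^0(Y,\mathcal{O}_Y(mD)) \to H^0(E,\mathcal{O}_E(mD)), \qquad r_m'\colon H^0(Y',\mathcal{O}_{Y'}(mD')) \to H^0(E',\mathcal{O}_{E'}(mD'))
\]
have the same dimension for every $m$, and then to compute each of these dimensions as a difference of two $h^0$'s that can be matched across $Y$ and $Y'$ via Hartogs extension on the normal varieties involved.

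First, using that $f$ is a small modification, I would fix open subsets $U\subset Y$ and $U'\subset Y'$ whose complements have codimension $\geq 2$, such that $f|_U\colon U\xrightarrow{\sim} U'$. By hypothesis $D'=f_*D$ is Cartier and $E'=f_*E$ is the strict transform, so under $f|_U$ the line bundle $\mathcal{O}_Y(mD)$ (resp.\ the rank-one reflexive sheaf $\mathcal{O}_Y(mD-E)$ attached to the Weil divisor $mD-E$) is identified with $\mathcal{O}_{Y'}(mD')$ (resp.\ $\mathcal{O}_{Y'}(mD'-E')$). I would then observe that the kernel of $r_m$ consists of those sections of $\mathcal{O}_Y(mD)$ whose zero divisor contains $E$, which, interpreting $\mathcal{O}_Y(mD-E)$ as the sheaf of rational sections $s$ of $\mathcal{O}_Y(mD)$ satisfying $\operatorname{div}(s)+mD\geq E$, gives
\[
\dim\operatorname{im}(r_m) \;=\; h^0(Y,\mathcal{O}_Y(mD)) \;-\; h^0(Y,\mathcal{O}_Y(mD-E)),
\]
together with the analogous formula on $Y'$.

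Finally, since $Y$ and $Y'$ are normal, any rank-one reflexive sheaf $\mathcal{F}$ satisfies $H^0(Y,\mathcal{F})=H^0(U,\mathcal{F}|_U)$ (and similarly on $Y'$), so the isomorphism $U\cong U'$ combined with the sheaf identifications above yields $h^0(Y,\mathcal{O}_Y(mD))=h^0(Y',\mathcal{O}_{Y'}(mD'))$ and $h^0(Y,\mathcal{O}_Y(mD-E))=h^0(Y',\mathcal{O}_{Y'}(mD'-E'))$. Subtracting gives equality of $\dim\operatorname{im}(r_m)$ and $\dim\operatorname{im}(r_m')$ for every $m$, and the equality of restricted volumes follows from the definition. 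The only subtle point, rather than a genuine obstacle, is verifying that the reflexive sheaves $\mathcal{O}_Y(mD-E)$ and $\mathcal{O}_{Y'}(mD'-E')$ really do correspond on $U\cong U'$; this needs the Weil divisors $mD-E$ and $mD'-E'$ to match under $f|_U$, which in turn follows from $D'=f_*D$, $E'=f_*E$, and the fact that $f|_U$ is an isomorphism.
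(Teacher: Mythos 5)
Your proof is correct and takes essentially the same route as the paper's: both identify $H^0(Y,\mathcal{O}_Y(mD))$ with $H^0(Y',\mathcal{O}_{Y'}(mD'))$ using the isomorphism in codimension one together with normality, and both observe that under this identification a section vanishes along $E$ exactly when its pushforward vanishes along $E'$ (you phrase this via $\dim\operatorname{im}(r_m)=h^0(mD)-h^0(mD-E)$, the paper via a direct comparison of kernels). Your write-up is simply a more detailed version of the paper's two-line argument.
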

\begin{proof}
A section $s \in H^0(Y,\mathcal{O}_Y(mD))$ vanishes on $E$ if and only if $s'=f_{*}(s)\in H^0(Y',\mathcal{O}_{Y'}(mD'))$ vanishes on $E'$. As $H^0(Y,\mathcal{O}_Y(mD)) \cong H^0(Y',\mathcal{O}_{Y'}(mD'))$ (because $f$ is a small $\mathbf{Q}$-factorial modification) we are done.
\end{proof}

\begin{prop}\label{prop:restvol}
Let $E$ be a prime divisor on a projective IHS manifold $X$ and $D$ any big divisor with $E\not\subset \mathbf{B}_+(D)$. Then \[
\mathrm{vol}_{X|E}(D)=P(D)^{2n-1}\cdot E=\frac{q_X(P(D),E)\cdot \mathrm{vol}(D)}{q_X(P(D))}.\]
\end{prop}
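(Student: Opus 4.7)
The strategy has three ingredients: (i) reduce to the positive part $P(D)$ of the divisorial Zariski decomposition; (ii) pass to a small $\mathbf{Q}$-factorial modification on which $P(D)$ becomes nef; and (iii) apply the known restricted-volume formula for nef big divisors, then translate the resulting intersection number via formula (\ref{relation}).

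First I would establish that $\mathrm{vol}_{X|E}(D) = \mathrm{vol}_{X|E}(P(D))$. The inclusions $\mathrm{Supp}(N(D)) \subset \mathbf{B}_-(D) \subset \mathbf{B}_+(D)$, together with the hypothesis $E \not\subset \mathbf{B}_+(D) = \mathbf{B}_+(P(D))$, imply that $E$ is not an irreducible component of $N(D)$. Then, for sufficiently divisible rational $m$, multiplication by the canonical section of $\mathcal{O}_X(mN(D))$ gives an isomorphism $H^0(X,mP(D)) \cong H^0(X,mD)$ intertwining the two restriction maps to $E$ up to multiplication by a nonzero section on $E$; hence the images of the two restriction maps have the same dimension. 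The statement extends to $\mathbf{R}$-divisors by continuity of the restricted volume on $\mathrm{Big}^E(X)^+$.

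Next, since $P(D) \in \overline{\mathrm{Mov}(X)} \cap \mathrm{Big}(X)$, the Morrison--Kawamata cone conjecture for projective IHS manifolds (Markman) provides a small $\mathbf{Q}$-factorial modification $f \colon X \DashedArrow X'$ to another projective IHS manifold such that $P' := f_*P(D)$ is nef on $X'$. Writing $E' := f_*E$ (still a prime divisor, since $f$ is an isomorphism in codimension one), Lemma \ref{lem1} yields $\mathrm{vol}_{X|E}(P(D)) = \mathrm{vol}_{X'|E'}(P')$. As $P'$ is nef and big with $E' \not\subset \mathbf{B}_+(P')$ (the augmented base locus transforms compatibly under the SQM, using that $E \not\subset \mathbf{B}_+(P(D))$), the restricted-volume formula for nef big divisors from \cite{Ein1} gives $\mathrm{vol}_{X'|E'}(P') = (P')^{2n-1} \cdot E'$. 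By Markman's Hodge-isometry theorem, $f$ identifies the Beauville--Bogomolov lattices, and hence the intersection numbers, so $(P')^{2n-1} \cdot E' = P(D)^{2n-1} \cdot E$. This chain yields the first claimed equality.

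For the final equality, I apply formula (\ref{relation}) with $\alpha = P(D)$ and $\beta = E$ to get
\[
q_X(P(D),E)\int_X P(D)^{2n} \;=\; q_X(P(D))\int_X P(D)^{2n-1} \cdot E.
\]
Combining with the Fujiki relation $\int_X P(D)^{2n} = c_X\, q_X(P(D))^n$ and the identities $\mathrm{vol}(D) = \mathrm{vol}(P(D)) = c_X q_X(P(D))^n$ (standard for big movable classes on IHS manifolds, cf.\ \cite{Bouck}), rearrangement gives the claimed formula. The main obstacle I foresee is the SQM step: one must justify cleanly the existence of a birational IHS model where $P(D)$ becomes nef and track that the condition $E \not\subset \mathbf{B}_+(P(D))$ transfers to $E' \not\subset \mathbf{B}_+(P')$; the passage from $\mathbf{Q}$- to $\mathbf{R}$-divisors also requires an appeal to continuity of $\mathrm{vol}_{X|E}$ on $\mathrm{Big}^E(X)^+$.
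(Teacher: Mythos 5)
Your proposal follows essentially the same route as the paper's proof: reduce to the positive part $P(D)$ (the paper cites Matsumura's result where you reprove the $H^0$-isomorphism directly), pass via Lemma \ref{lem1} to a small $\mathbf{Q}$-factorial modification on which $P(D)$ becomes nef, apply the restricted-volume formula for nef big divisors from \cite{Ein1} there, transfer the numbers back using the birational invariance of the BBF form and of the Fujiki constant, and conclude with formula (\ref{relation}) and the Fujiki relation. The handling of $\mathbf{R}$-divisors by approximation with $\mathbf{Q}$-classes and continuity of $\mathrm{vol}_{X|E}$ and of the divisorial Zariski decomposition is likewise exactly how the paper argues, so the proposal is correct and not substantively different.
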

\begin{proof}
Let $f\colon X\DashedArrow X'$ be a birational map to a projective IHS manifold $X'$ such that $f_{*}(P(D))=P(D')$ is nef ($D'=f_*(D)$), and set $E'=f_{*}(E)$ (this birational map exists by \cite[Theorem 1.2]{MZ13}). As $\mathrm{Big}^{E}(X)^+\subset \mathrm{Big}(X)$ is open, we can find a sequence $\{\gamma_n\}_n$ of big $\mathbf{Q}$-divisor classes converging to $[D]=\gamma$, which in turn gives (via push-forward) a sequence of big $\mathbf{Q}$-divisor classes $\{\gamma'_k\}_k$ in $\mathrm{Big}^{E'}(X')^+$, converging to $[D']=\gamma'$. 
By \cite[Proposition 3.1]{Matsumura} we have $\mathrm{vol}_{X|E}(\gamma_k)=\mathrm{vol}_{X|E}(P(\gamma_k))$. Moreover, by Lemma \ref{lem1}, we have $\mathrm{vol}_{X|E}(\gamma_k)=\mathrm{vol}_{X'|E'}(\gamma'_k)$, and hence $\mathrm{vol}_{X|E}(P(\gamma))=\mathrm{vol}_{X'|E'}(P(\gamma'))$.
By the continuity of the divisorial Zariski decomposition (see \cite[Proposition 4.31]{Den}) we obtain $\{P(\gamma_k)\}_k \to P(\gamma)$, and $\{P(\gamma'_k)\}_k \to P(\gamma')$. By the continuity of $\mathrm{vol}_{X|E}(-)$ (resp.\ $\mathrm{vol}_{X'|E'}(-)$) in $\mathrm{Big}^{E}(X)^+$ (resp.\ $\mathrm{Big}^{E'}(X')^+$)  we conclude that $\mathrm{vol}_{X|E}(D)=\mathrm{vol}_{X'|E'}(D')$. Now, by \cite[Corollary 3.3]{Matsumura}, we have \[\mathrm{vol}_{X'|E'}(D')=\mathrm{vol}_{X'|E'}(P(D'))=P(D')^{2n-1}\cdot E',\] and using formula (\ref{relation}) we have 
\[
q_{X'}(P(D'),E')\cdot (P(D'))^{2n}=q_{X'}(P(D'))\cdot (P(D')^{2n-1}\cdot E').
\]
In particular, using the Fujiki relation and the equality $\mathrm{vol}_{X'|E'}(P(D'))=P(D')^{2n-1}\cdot E'$, we obtain
\[
c_{X'}\cdot (q_{X'}(P(D')))^n\cdot q_{X'}(P(D'),E')=q_{X'}(P(D'))\cdot \mathrm{vol}_{X'|E'}(P(D')).
\]
On $X$ we have 
\[
c_{X}\cdot (q_X(P(D)))^n \cdot q_{X}(P(D),E)=q_{X}(P(D))\cdot (P(D)^{2n-1}\cdot E),
\]
and it is known that $f$ induces an isometry between $\mathrm{Pic}(X)$ and $\mathrm{Pic}(X')$ with respect to the BBF forms (see for example \cite[Proposition I.6.2]{OGrady}), so that $q_{X'}(P(D'),E')=q_{X}(P(D),E)$, and $q_{X}(P(D))=q_{X}(P(D'))$. Moreover, the Fujiki constant is a birational invariant, thus $c_X=c_{X'}$. Putting everything together we obtain 
\begin{align}\label{eqn1}
\mathrm{vol}_{X'|E'}(P(D'))&=c_{X'}(q_{X'}(P(D')))^{n-1}q_{X'}(P(D'),E')\\&=c_{X}(q_{X}(P(D)))^{n-1}q_{X}(P(D),E)=P(D)^{2n-1}\cdot E  \nonumber.
\end{align}
But $\mathrm{vol}_{X'|E'}(P(D'))=\mathrm{vol}_{X|E}(P(D))$, hence $\mathrm{vol}_{X|E}(P(D))=P(D)^{2n-1}\cdot E$.
We conclude that $$\mathrm{vol}_{X|E}(D)=P(D)^{2n-1}\cdot E=\frac{q_X(P(D),E)\cdot \mathrm{vol}(D)}{q_X(P(D))}$$ by using the equality $\mathrm{vol}(D)=P(D)^{2n}=c_{X}\cdot (q_X(P(D)))^n$ (see \cite[Proposition 4.12]{Bouck}).
\end{proof}

Let $Y$ be a normal complex projective variety. It was proven in \cite{Den} that the volume function on a projective IHS manifold is piecewise polynomial. Using that for a prime divisor $E$ the function $\mathrm{vol}_{Y|E}(-)$ (the restricted volume function along $E$) is the derivative of $\mathrm{vol}_Y(-)$ along the direction in $N^1(Y)_{\mathbf{R}}$ induced by $E$ (see \cite[Corollary C]{Mustata} or \cite[Corollary C]{BFJ}), one deduces that if the volume function is piecewise, also $\mathrm{vol}_{Y|E}(-)$ is. As a consequence of the above proposition, we provide below a proof of the local polynomiality of $\mathrm{vol}_{X|E}(-)$, without using the mentioned differentiability result. 

\begin{cor}
The function $\mathrm{vol}_{X|E}(-)\colon \mathrm{Big}^{E}(X)^+ \to \mathbf{R}^{\geq 0} $ is piecewise. Furthermore, $\mathrm{vol}_{X|E}(-)$ takes rational values on big, $\mathbf{Q}$-divisor classes.
\end{cor}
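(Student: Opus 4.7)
The plan is to combine the formula from Proposition \ref{prop:restvol},
\[
\mathrm{vol}_{X|E}(D) = c_X \cdot q_X(P(D))^{n-1}\cdot q_X(P(D),E),
\]
with the structure of the Boucksom--Zariski chamber decomposition of $\mathrm{Big}(X)$. Recall from \cite{Den} (cited as Theorem 1.6 in the introduction) that $\mathrm{Big}(X)$ is partitioned into locally rational polyhedral subcones on each of which the assignment $D \mapsto P(D)$ is the restriction of a linear map $L_\Sigma \colon N^1(X)_{\mathbf{R}} \to N^1(X)_{\mathbf{R}}$, and moreover this linear map is defined over $\mathbf{Q}$ (it is built from $q_X$-orthogonal projections onto subspaces spanned by the prime exceptional components of the negative part).

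First, I would fix a point $[D_0] \in \mathrm{Big}^E(X)^+$ and a Boucksom--Zariski chamber $\Sigma$ containing $[D_0]$ in its closure; since $\mathrm{Big}^E(X)^+$ is open in $\mathrm{Big}(X)$, we may find an open neighbourhood $U$ of $[D_0]$ contained in $\Sigma \cap \mathrm{Big}^E(X)^+$ (shrinking if necessary, or working chamber-by-chamber on a finite partition of $U$). On $U$ we have $P(D) = L_\Sigma(D)$, so $q_X(P(D),E) = q_X(L_\Sigma(D),E)$ is a linear function of $[D] \in U$ and $q_X(P(D)) = q_X(L_\Sigma(D))$ is a quadratic form in $[D]$. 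Substituting into the formula above, $\mathrm{vol}_{X|E}(-)$ is on $U$ a polynomial of degree $2n-1$ in $[D]$, which is exactly the local polynomiality claim.

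For the rationality statement, I would argue that every ingredient of the formula is rational on rational classes. The divisorial Zariski decomposition is rational (Remark after Theorem \ref{DivZarDec}), so $P(D)$ is a $\mathbf{Q}$-divisor when $D$ is. The BBF form is integer-valued on $H^2(X,\mathbf{Z})$, hence rational on $N^1(X)_{\mathbf{Q}}$, so $q_X(P(D))$ and $q_X(P(D),E)$ are rational numbers. Finally, $c_X \in \mathbf{Q}$: applying the Fujiki relation (\ref{Fujikirelation}) to any integral class $\alpha \in N^1(X)$ with $q_X(\alpha)\neq 0$ gives $c_X = (\int_X \alpha^{2n})/q_X(\alpha)^n$, a ratio of two integers. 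Multiplying these rational quantities yields $\mathrm{vol}_{X|E}(D) \in \mathbf{Q}$.

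No step is genuinely difficult: the heavy lifting is done by Proposition \ref{prop:restvol} together with the chamber decomposition theorem from \cite{Den}. The only point requiring a small amount of care is ensuring that $U$ can be taken inside a single chamber, or equivalently that one treats the finitely many chambers meeting a given compact neighbourhood separately; this is immediate from the local rational polyhedrality of the decomposition.
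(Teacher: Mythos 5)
Your proposal is correct and follows essentially the same route as the paper: it combines the formula $\mathrm{vol}_{X|E}(D)=c_X\,q_X(P(D))^{n-1}q_X(P(D),E)$ from Proposition \ref{prop:restvol} with the linearity of $D\mapsto P(D)$ on each Boucksom--Zariski chamber (the paper phrases this via a basis of $S^{\perp}$ rather than your projection map $L_\Sigma$, which is the same thing), and deduces rationality from the rationality of the Zariski decomposition, of $q_X$ on $N^1(X)_{\mathbf{Q}}$, and of the Fujiki constant. Your explicit justification that $c_X\in\mathbf{Q}$ via the Fujiki relation is a nice touch that the paper leaves implicit.
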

\begin{proof}
Choose a Boucksom-Zariski chamber $\Sigma_S$ (see \cite[Definition 4.11]{Den}), associated with an exceptional set of prime divisors $S=\{E_1,\dots,E_k\}$ not containing $E$, and let $\mathcal{B}=\{P_1,\dots,P_j\}$ be a basis for $S^{\perp}$. Here "exceptional set" means that the Gram matrix of $S$ is negative definite (in particular, any element of $S$ is a prime exceptional divisor). Then, using equation (\ref{eqn1}), we obtain
\[
\left(\mathrm{vol}_{X|E}(-)\right)_{|\Sigma_S}=c_{X}\left(q_{X}\left(\sum_{i=1}^jx_iP_i\right)\right)^{n-1}\cdot q_{X}\left(\sum_{i=1}^jx_iP_i,E\right),
\]
which is a homogeneous polynomial of degree $2n-1$. Then $\mathrm{vol}_{X|E}(-)$ is piecewise, because $\mathrm{Big}^E(X)^+$ is covered by the Boucksom-Zariski chambers of the type $\Sigma_S$, with $S$ such that $E \not\in S$.
The rationality of $\mathrm{vol}_{X|E}(-)$ on $\mathbf{Q}$-divisor classes follows by the rationality of the Fujiki constant and $q_X$ on $N^1(X)_{\mathbb{Q}}$.
\end{proof}

\section{Polygons of Newton-Okounkov-type}
In this section, we associate with any big divisor $D$ on a projective IHS manifold $X$ a $2$-dimensional convex body. We compute the Euclidean volume of these bodies, and we deduce that they are in fact (possibly irrational) polygons.

\begin{prop}\label{prop1}
Let $D$ be any big $\mathbf{R}$-divisor on $X$. Then a prime divisor $E$ is an irreducible component of $\mathbb{B}_{+}(D)=\mathbb{B}_{+}(P(D))$ if and only if $E$ is exceptional and
\[
E \in \mathrm{Null}_{q_X}(P(D)):=\{E' \text{ prime divisor }| \; q_X(P(D),E')=0\}.
\]
\end{prop}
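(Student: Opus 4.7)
The plan is to reduce to a small $\mathbf{Q}$-factorial modification $f\colon X \DashedArrow X'$ on which $P(D'):=f_{*}(P(D))$ is nef---such an $f$ exists by the Boucksom-Zariski chamber decomposition of $\overline{\mathrm{Mov}(X)}$ already used in the proof of Proposition \ref{prop:restvol}---and then to combine Nakamaye's theorem on $X'$ with the identity (\ref{relation}). Since $f$ is an isomorphism in codimension one, $E \mapsto E':=f_{*}(E)$ gives a bijection between prime divisors of $X$ and $X'$, and $f$ induces an isometry $\mathrm{Pic}(X)\cong \mathrm{Pic}(X')$ with respect to the BBF forms.

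First I would transfer the question from $X$ to $X'$. By Lemma \ref{lem1} one has $\mathrm{vol}_{X|E}(P(D))=\mathrm{vol}_{X'|E'}(P(D'))$, and the restricted-volume characterisation of $\mathbf{B}_{+}$ for big divisors (cf.\ \cite{Ein1}) says that a prime divisor $E$ is a divisorial component of $\mathbf{B}_{+}(P(D))$ if and only if $\mathrm{vol}_{X|E}(P(D))=0$; note that for $D$ big any such $E$ must indeed be a component, as $\mathbf{B}_{+}(P(D))\neq X$. Hence $E$ is a divisorial component of $\mathbf{B}_{+}(P(D))$ iff $E'$ is a divisorial component of $\mathbf{B}_{+}(P(D'))$. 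Next, since $P(D')$ is nef and big, Nakamaye's theorem yields $\mathbf{B}_{+}(P(D'))=\mathrm{Null}(P(D'))$, so this in turn is equivalent to $P(D')^{2n-1}\cdot E'=0$. Using (\ref{relation}) together with $q_{X'}(P(D'))>0$ (as $P(D')$ is big), this is equivalent to $q_{X'}(P(D'),E')=0$, and transporting through the isometry gives $q_X(P(D),E)=0$.

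To close the biconditional I would extract the "exceptional" clause from the signature $(1,\rho(X)-1)$ of $q_X$ on $N^1(X)_{\mathbf{R}}$. Since $P(D)$ is big, $q_X(P(D))>0$, so $q_X$ is negative definite on the hyperplane $P(D)^{\perp}$. For a prime divisor $E$ the class $[E]$ is non-zero in $N^1(X)_{\mathbf{R}}$ (for any ample $H$ one has $E\cdot H^{2n-1}>0$), hence $q_X(P(D),E)=0$ forces $q_X(E)<0$, i.e., $E$ is exceptional. Conversely, whenever $E$ is exceptional with $q_X(P(D),E)=0$, running the chain of equivalences backwards places $E$ in $\mathbf{B}_{+}(P(D))=\mathbf{B}_{+}(D)$ as a divisorial component.

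The main obstacle I anticipate is the very first step: comparing augmented base loci directly across the small modification $f$ is delicate (it has indeterminacies), but the detour through restricted volumes (Lemma \ref{lem1} combined with the restricted-volume characterisation of $\mathbf{B}_{+}$) bypasses this by reducing a geometric comparison to a numerical one. A conceptually cleaner alternative would be to invoke Nakamaye's theorem in a form valid for the mildly singular $\mathbf{Q}$-factorial variety $X'$ and then transfer divisorial data back to $X$; either way the essential point is that divisorial components of $\mathbf{B}_{+}$ are a birational invariant under small $\mathbf{Q}$-factorial modifications.
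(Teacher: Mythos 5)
The paper does not give a self-contained argument here (it simply cites \cite[Remark 2.3, Proposition 2.4]{DO23}), so your attempt at an actual proof is a genuinely different route. Its overall architecture is sound in one direction: if $E$ is exceptional with $q_X(P(D),E)=0$, then on the nef model $q_{X'}(P(D'),E')=0$, so $P(D')^{2n-1}\cdot E'=0$ and $E'\subset \mathbf{B}_{+}(P(D'))$ by Nakamaye; since $P(D')$ is nef, the image of $H^0(X',mP(D'))\to H^0(E',mP(D')|_{E'})$ has rank at most $h^0(E',mP(D')|_{E'})=O(m^{2n-1})\cdot (P(D')|_{E'})^{2n-1}$ up to constants, hence $\mathrm{vol}_{X'|E'}(P(D'))=0$; Lemma \ref{lem1} and the contrapositive of \cite[Theorem A]{Ein1} then put $E$ inside $\mathbf{B}_{+}(P(D))=\mathbf{B}_{+}(D)$, and being a divisor it is a component. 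Your signature argument showing that $q_X(P(D),E)=0$ forces $q_X(E)<0$ (because $q_X(P(D))>0$) is also correct.

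The gap is in the forward direction, at the step ``$E$ is a divisorial component of $\mathbf{B}_{+}(P(D))$ $\Rightarrow$ $\mathrm{vol}_{X|E}(P(D))=0$'', which you attribute to \cite{Ein1}. Theorem A of \cite{Ein1} only gives the converse (positivity of the restricted volume when $E\not\subset \mathbf{B}_{+}$); the characterisation of irreducible components of $\mathbf{B}_{+}$ by vanishing restricted volumes (their Theorem C, and its extension to normal varieties) is formulated as a \emph{limit} of $\mathrm{vol}_{X|E}(\xi')$ over classes $\xi'\to [P(D)]$ with $E\not\subset \mathbf{B}_{+}(\xi')$, precisely because the section-theoretic quantity $\mathrm{vol}_{X|E}$ is neither a numerical invariant nor continuous at classes whose augmented base locus contains $E$. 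Since Lemma \ref{lem1} compares exactly the section-theoretic ranks, your transfer to $X'$ does not go through as written when $P(D)$ is big but not nef on $X$ — which is the only case the detour is meant to handle. Two repairs are available. Either use the limiting form of Theorem C and transfer nearby classes, combining Lemma \ref{lem1} with the continuity of the divisorial Zariski decomposition and of $\mathrm{vol}_{X'|E'}(-)$ on $\mathrm{Big}^{E'}(X')^{+}$, exactly as in the proof of Proposition \ref{prop:restvol}. Or argue directly on $X$, avoiding the nef model for this direction: if $E\subset \mathbf{B}_{+}(P(D))=\mathbf{B}(P(D)-\epsilon A)$ for small $\epsilon>0$ and $A$ ample, then for $0<\epsilon<\epsilon'$ one has $\mathbf{B}(P(D)-\epsilon A)\subset \mathbf{B}_{-}(P(D)-\epsilon' A)$, whose divisorial part is $\mathrm{Supp}\,N(P(D)-\epsilon' A)$ (as quoted in the paper from \cite[Theorem 4.1]{Kuronya2}); hence $E$ is a component of an exceptional negative part, so $q_X(E)<0$ and $q_X\bigl(P(P(D)-\epsilon' A),E\bigr)=0$ for all small $\epsilon'>0$, and letting $\epsilon'\to 0$ with \cite[Proposition 4.31]{Den} gives $q_X(P(D),E)=0$. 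Relatedly, your closing claim that divisorial components of $\mathbf{B}_{+}$ are invariant under small $\mathbf{Q}$-factorial modifications is not something you may assume: in this setting it is essentially equivalent to the proposition being proved.
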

\begin{proof}
See \cite[Remark 2.3, Proposition 2.4]{DO23}.
\end{proof}

\begin{defn}
Let $D$ be any pseudo-effective $\mathbf{R}$-divisor on $X$, and $E$ a prime divisor. We define the threshold
\[
\mu_E(D):=\mathrm{sup}\{t\in \mathbf{R}^{\geq 0} \;|\; D_t \text{ is pseudo-effective}\},\]
where $D_t:=D-tE$. The definition makes sense also for pseudo-effective classes, and we will adopt the same notation in this case.
\end{defn}

\begin{lem}\label{rmk1}
Let $E$ be a prime divisor on $X$, and $D$ a big $\mathbf{R}$-divisor. Then $E \not\subset \mathbb{B}_{-}(D)$ implies $E \not\subset \mathbb{B}_{+}(D_t)$, for any $t\in ]0,\mu_E(D)[$. 
\begin{proof}
Recall that the divisorial part of $\mathbb{B}_{-}(D)$ equals the union of the irreducible components of $N(D)$ (see for example \cite[Theorem 4.1]{Kuronya2}). If $q_X(E)\geq 0$, there is nothing to prove, since the divisorial augmented base locus of big divisors on $X$ is made of prime exceptional divisors. By Proposition \ref{prop1}, it suffices to prove that $E$ does not belong to $\mathrm{Null}_{q_X}(P(D_t))$. Suppose by contradiction it does. We have $P(D)+N(D)=P(D_t)+N(D_t)+tE$, and we can find a big and $q_X$-nef integral divisor $M$ satisfying $\mathrm{Null}_{q_X}(P(D_t))=\mathrm{Null}_{q_X}(M)$ (see \cite[Lemma 4.15]{Den}). Then the Hodge-index Theorem implies that the Gram matrix of $N(D_t)+tE$ is negative definite, which implies $N(D)=N(D_t)+tE$, and this is absurd, because $E \not \subset \mathbb{B}_{-}(D)$ by assumption.
\end{proof}
\end{lem}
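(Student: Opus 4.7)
The plan is to dichotomize on the sign of $q_X(E)$. If $q_X(E)\ge 0$, the prime divisor $E$ is not exceptional, and Proposition \ref{prop1} immediately gives $E\not\subset \mathbb{B}_+(D_t)$, because every divisorial component of the augmented base locus of a big divisor on $X$ must be a prime exceptional divisor. The substantive case is therefore $q_X(E)<0$, which I handle by contradiction.

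Assume $E\subset \mathbb{B}_+(D_t)$ for some $t\in\,]0,\mu_E(D)[$. Since $E$ is exceptional, Proposition \ref{prop1} forces $q_X(P(D_t),E)=0$. The key idea is to argue that the identity
\[
D \;=\; P(D_t) \;+\; \bigl(N(D_t)+tE\bigr)
\]
is the divisorial Zariski decomposition of $D$. Once this is established, the uniqueness statement in Theorem \ref{DivZarDec} forces $N(D)=N(D_t)+tE$, placing $E$ in the support of $N(D)$; but the divisorial part of $\mathbb{B}_-(D)$ equals the support of $N(D)$, so this contradicts the hypothesis $E\not\subset \mathbb{B}_-(D)$. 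To verify the axioms of Theorem \ref{DivZarDec} for this decomposition: $P(D_t)\in\overline{\mathrm{Mov}(X)}$ is immediate from the decomposition of $D_t$; $q_X$-orthogonality of $P(D_t)$ with each distinct prime component of $N(D_t)+tE$ holds because components of $N(D_t)$ are $q_X$-orthogonal to $P(D_t)$ by the Zariski decomposition of $D_t$, and $E$ is orthogonal to $P(D_t)$ by the contradiction hypothesis; the only nontrivial condition is the negative definiteness of the Gram matrix of the distinct prime components of $N(D_t)+tE$.

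The hard part is therefore this negative-definiteness. The approach is to apply \cite[Lemma 4.15]{Den} to produce a big integral $q_X$-nef divisor $M$ with $\mathrm{Null}_{q_X}(M)=\mathrm{Null}_{q_X}(P(D_t))$; then $E$ and all prime components of $N(D_t)$ lie in the $q_X$-orthogonal complement $M^\perp\subset N^1(X)_{\mathbf{R}}$. Since $M$ is big and $q_X$-nef, $q_X(M)>0$, and the $(1,\rho(X)-1)$ signature of $q_X$ together with the Hodge-index theorem force $q_X|_{M^\perp}$ to be negative definite. The exceptionality of $N(D_t)$ already guarantees the linear independence of its prime components; adjoining $E$ (either already among those components, in which case the support is unchanged and there is nothing further to check, or genuinely new and therefore linearly independent of them inside the negative-definite subspace $M^\perp$) preserves negative definiteness of the Gram matrix, completing the verification of the Zariski axioms and yielding the desired contradiction.
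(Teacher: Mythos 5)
Your proof is correct and follows essentially the same route as the paper: the same dichotomy on the sign of $q_X(E)$, the same contradiction via Proposition \ref{prop1}, the same appeal to \cite[Lemma 4.15]{Den} to produce a big $q_X$-nef divisor $M$ with $\mathrm{Null}_{q_X}(M)=\mathrm{Null}_{q_X}(P(D_t))$, the Hodge-index theorem on $M^{\perp}$, and the uniqueness of the divisorial Zariski decomposition to force $N(D)=N(D_t)+tE$, contradicting $E\not\subset \mathbb{B}_{-}(D)$. You merely make explicit the verification of the Zariski axioms (orthogonality and the linear-independence point behind negative definiteness) that the paper leaves implicit.
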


\begin{con}\label{construction1}
Let $D$ be any big $\mathbf{R}$-divisor on $X$ and $E$ a prime divisor not lying in $\mathbb{B}_{-}(D)$. Then $q_X(P(D),E)>0$ for any $t \in ]0,\mu_E(D)[$, by the remark above. We define (inspired by \cite[Exercise 2.2.21]{Kuronya3}) the following $2$-dimensional subset
\[
\Delta_E^{\mathrm{num}}(D):=\{(t,y) \in \mathbb{R}^2 \;|\; 0 \leq t \leq \mu_E(D), \; 0\leq y \leq q_X(P(D_t),E)\},
\]
where $P(D_t)$ is the positive part of the divisorial Zariski decomposition of $D_t$. More generally, if $D$ is pseudo-effective, we define 
\[
\Delta_E^{\mathrm{num}}(D):= \{(\mu_E(D),y) \in \mathbb{R}^2 \;|\; 0\leq y \leq q_X(P(D),E)\}.
\]
In particular, when $D$ is pseudo-effective and non-big, $\Delta_E^{\mathrm{num}}(D)$ is either a segment or a point.
\end{con}

\begin{rmk}\label{rmkboundary} 
When $D$ is a big divisor, we note that the boundary of $\Delta^{\mathrm{num}}_E(D)$ is defined by a piecewise linear function. Indeed, by \cite[Theorem 1.6]{Den} we know that $\mathrm{Big}(X)$ admits a locally finite decomposition into locally rational polyhedral subcones, called Boucksom-Zariski chambers.\ On each Boucksom-Zariski chamber the divisorial Zariski decomposition varies linearly, hence $\partial \Delta^{\mathrm{num}}_E(D)$ is piecewise linear.
\end{rmk}

\begin{prop}\label{thmden}
Let $D$ be any big $\mathbf{R}$-divisor $D$ on $X$ and $E$ a prime divisor not lying in $\mathbf{B}_{-}(D)$. Then $\Delta^{\mathrm{num}}_E(D)$ is a convex subset of $\mathbf{R}^2$ and its euclidean volume equals $q_X(P(D))/2$. In particular, we have the relations 
\[
2^nc_X(\mathrm{vol}_{\mathbf{R}^2}(\Delta^{\mathrm{num}}_E(D))^n=c_X(q_X(P(D)))^n=\mathrm{vol}(D)=(2n)! \cdot \mathrm{vol_{\mathbf{R}^n}}(\Delta(D)),
\] where $\Delta(D)$ is the Newton-Okounkov body of $D$, with respect to any admissible flag.
\begin{proof}
We first show that $\Delta^{\mathrm{num}}_E(D)$ is convex. Let $\mathrm{Big}^E(X)^+$ be the convex open subcone of $\mathrm{Big}(X)$ consisting of the big classes $\alpha$ such that $E\not\subset \mathbf{B}_{+}(\alpha)$. We claim that the function $q_X(-,E)$ is concave on $\mathrm{Big}^E(X)^+$. If we prove the claim we are done, because by Lemma \ref{lem1}, the class of any element of the form $D-tE$, with $t\in ]0,\mu_E(D)[$ lies in $\mathrm{Big}^E(X)^+$.
Let $D_1,D_2$ two big divisors whose classes lie in $\mathrm{Big}^E(X)^+$. It suffices to show that 
\[
q_X(P(D_1+D_2),E)\geq q_X(P(D_1),E)+q_X(P(D_2),E). 
\] 
We observe that $P(D_1+D_2)\geq P(D_1)+P(D_2)$, because $P(D_1+D_2)$ is the maximal $q_X$-nef subdivisor of $D_1+D_2$. Furthermore, as $D_1+D_2=P(D_1)+P(D_2)+N(D_1)+N(D_2)$, $P(D_1+D_2)$ is obtained by adding to $P(D_1)+P(D_2)$ something coming from $N(D_1)+N(D_2)$. But $E\not\in \mathrm{Supp}(N(D_1)+N(D_2))$, hence $q_X(P(D_1+D_2),E)\geq q_X(P(D_1),E)+q_X(P(D_2),E)$, and we are done. Now we compute the volume of $\Delta^{\mathrm{num}}_E(D)$. Using relation (\ref{relation}), we have 
\begin{equation}\label{relation1}
q_X(P(D_t),E)\int [P(D_t)]^{2n}=q_X(P(D_t))\int [P(D_t)]^{2n-1}[E], \text{ for any } t.
\end{equation}
By definition of $\Delta^{\mathrm{num}}_E(D)$,
\[
\mathrm{vol}_{\mathbf{R}^2}(\Delta^{\mathrm{num}}_E(D))=\int_0^{\mu_E(D)} q_X(P(D_t),E) dt.
\]
By Proposition \ref{prop:restvol} we have $\int [P(D_t)]^{2n-1}[E]=\mathrm{vol}_{X|E}(P(D_t))$. From equation (\ref{relation1}) we obtain
\begin{equation}\label{relation2}
\mathrm{vol}_{\mathbf{R}^2}(\Delta^{\mathrm{num}}_E(D))=\int_0^{\mu_E(D)}q_X(P(D_t),E) dt= \int_0^{\mu_E(D)} q_X(P(D_t))  \frac{\mathrm{vol}_{X|E}(P(D_t))}{\mathrm{vol}(P(D_t))} dt.
\end{equation}
By \cite[Corollary 4.25, item (iii)]{Mustata}, in the interval $]0,\mu_E(D)[$ it holds 
\begin{equation}\label{derivativeofvolume}
\frac{d}{dt}(\mathrm{vol}_{X}(P(D_t)))=-2n\cdot\mathrm{vol}_{X|E}(P(D_t)).
\end{equation}
Also, using the Fujiki relation and \cite[Proposition 4.12]{Bouck}, we have 
\[
q_X(P(D_t))=\frac{\left(\int [P(D_t)]^{2n}\right)^{1/n}}{(c_X)^{1/n}}=\frac{\mathrm{vol}(P(D_t))^{1/n}}{(c_X)^{1/n}}.
\]
Then equality (\ref{relation2}) becomes
\[
\mathrm{vol}_{\mathbf{R}^2}(\Delta^{\mathrm{num}}_E(D))=-\frac{1}{2n (c_X)^{1/n}}\int_0^{\mu_E(D)}- 2n \left(\mathrm{vol}(P(D_t))\right)^{\frac{1}{n}-1}\mathrm{vol}_{X|E}(P(D_t)) dt,
\]
and hence, by equality (\ref{derivativeofvolume}),
\[
\mathrm{vol}_{\mathbf{R}^2}(\Delta^{\mathrm{num}}_E(D))=-\frac{1}{2n(c_X)^{1/n}} \Bigg[\frac{\big(\mathrm{vol}(P(D_t))\big)^{1/n}}{1/n}\Bigg]^{\mu_E(D)}_{0}=\frac{1}{2 (c_X)^{1/n}}\mathrm{vol}(P(D))^{1/n},
\]
where the last equality in the above formula comes from the fact that $\mathrm{vol}(P(D_{\mu_E(D)}))=0$, because $P(D_{\mu_E(D)})$ is not big, by definition of the threshold $\mu_E(D)$. Using once again the Fujiki relation we obtain $ \mathrm{vol}_{\mathbf{R}^2}(\Delta^{\mathrm{num}}_E(D))=q_X(P(D))/2$, as wanted.
\end{proof}
\end{prop}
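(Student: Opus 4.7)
The plan is to treat the two claims (convexity and area) separately and then deduce the displayed chain of equalities. Convexity will follow from concavity of the boundary function $t \mapsto q_X(P(D_t), E)$, while the area computation will reduce, via the Fujiki relation and identity (\ref{relation}), to a one-variable integral whose integrand is a perfect derivative.

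For convexity it suffices to verify that $\alpha \mapsto q_X(P(\alpha), E)$ is concave on the open convex subcone $\mathrm{Big}^E(X)^+ \subset \mathrm{Big}(X)$ of classes with $E \not\subset \mathbf{B}_+(\alpha)$; Lemma \ref{rmk1} places the whole segment $\{[D - tE] : t \in \,]0, \mu_E(D)[\,\}$ inside this cone. Since $P$ is positively $1$-homogeneous and $q_X(-, E)$ is linear, concavity reduces to super-additivity: $q_X(P(D_1 + D_2), E) \geq q_X(P(D_1), E) + q_X(P(D_2), E)$. The key ingredient is the inequality $P(D_1 + D_2) \geq P(D_1) + P(D_2)$, which follows from the maximality characterisation of the positive part (Remark \ref{Rem:zardecomp}), as $P(D_1) + P(D_2)$ is a $q_X$-nef subdivisor of $D_1 + D_2$. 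The excess $P(D_1 + D_2) - P(D_1) - P(D_2)$ is then a non-negative combination of components of $N(D_1) + N(D_2)$; since $E$ does not appear in these supports and $q_X(E, E') \geq 0$ for distinct primes, the excess pairs non-negatively with $E$, yielding super-additivity.

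For the area, I would start from
\[
\mathrm{vol}_{\mathbf{R}^2}(\Delta_E^{\mathrm{num}}(D)) = \int_0^{\mu_E(D)} q_X(P(D_t), E) \, dt
\]
and rewrite the integrand using (\ref{relation}), the Fujiki relation $\int P(D_t)^{2n} = c_X q_X(P(D_t))^n = \mathrm{vol}(P(D_t))$, and Proposition \ref{prop:restvol}, which identifies $P(D_t)^{2n-1} \cdot E$ with $\mathrm{vol}_{X|E}(P(D_t))$. A short manipulation gives
\[
q_X(P(D_t), E) = \frac{\mathrm{vol}(P(D_t))^{1/n - 1} \, \mathrm{vol}_{X|E}(P(D_t))}{c_X^{1/n}}.
\]
The essential analytic input is the Lazarsfeld--Musta\c{t}\u{a}/BFJ derivative formula $\tfrac{d}{dt} \mathrm{vol}(P(D_t)) = -2n \cdot \mathrm{vol}_{X|E}(P(D_t))$, which collapses the integrand into a total derivative. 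Integrating, using $\mathrm{vol}(P(D_{\mu_E(D)})) = 0$ and $\mathrm{vol}(P(D))^{1/n} = c_X^{1/n} q_X(P(D))$ (Fujiki again), yields $\mathrm{vol}_{\mathbf{R}^2}(\Delta_E^{\mathrm{num}}(D)) = q_X(P(D))/2$.

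The displayed chain of equalities is then essentially automatic: raising to the $n$-th power and multiplying by $c_X$ gives the first equality; the Fujiki relation together with $\mathrm{vol}(D) = \mathrm{vol}(P(D))$ gives the second; and the third is the Lazarsfeld--Musta\c{t}\u{a}--Kaveh--Khovanskii theorem applied in dimension $2n$. I expect the concavity step to be the main obstacle: it is a conceptual, not computational, argument that simultaneously invokes the maximality characterisation of the positive part and the sign property $q_X(E, E') \geq 0$ for distinct primes, without either of which the excess could fail to pair non-negatively with $E$. Once concavity is secured, the area identity is a clean antiderivative driven by Fujiki and the differentiability of the volume along $E$.
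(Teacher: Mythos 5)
Your proposal is correct and follows essentially the same route as the paper: concavity of $\alpha\mapsto q_X(P(\alpha),E)$ on $\mathrm{Big}^E(X)^+$ via super-additivity (maximality of the positive part plus $q_X(E,E')\geq 0$ for the excess supported in $N(D_1)+N(D_2)$), and the area identity via relation (\ref{relation}), Proposition \ref{prop:restvol}, the Fujiki relation, and the derivative formula for the volume, which turns the integrand into an exact derivative. You even cite the right lemma (Lemma \ref{rmk1}) for placing the segment $[D-tE]$, $t\in\,]0,\mu_E(D)[$, inside $\mathrm{Big}^E(X)^+$, where the paper's text points to Lemma \ref{lem1}.
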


\begin{rmk}
Suppose that a prime divisor $E$ belongs to $\mathbb{B}_{-}(D)$, for some pseudo-effective divisor $D$ on $X$. Let $\nu_E(D)$ be the coefficient of $E$ in $N(D)$. Then we have 
\[
2 \cdot \mathrm{vol}_{\mathbf{R}^2}(\Delta^{\mathrm{num}}_E(D-\nu_E(D) E))=q_X(P(D)),
\]
 because $P(D)=P(D-\nu_E(D) E)$. Hence, up to subtracting $\nu_E(D) E$ from $D$, we can define $\Delta^{\mathrm{num}}_E(D)$, also if $E\subset \mathbf{B}_{-}(D)$. In particular, if $E$ is any prime divisor, we define $\Delta^{\mathrm{num}}_E(D):=\Delta^{\mathrm{num}}_E(D-\nu_E(D) E)$. Note that if $D$ is pseudo-effective and not big, then $\Delta^{\mathrm{num}}_E(D)$ has empty interior. 
\end{rmk}

We now recall the definition of Boucksom-Zariski chamber associated with a big divisor.

\begin{defn}
Let $D$ be any big $\mathbf{R}$-divisor on a projective IHS manifold $X$. The Boucksom-Zariski chamber associated with $D$ is the convex cone
\[
\Sigma_P:=\{[D] \in \mathrm{Big}(X)\;|\;\mathrm{Neg}_{q_X}(D)=\mathrm{Null}_{q_X}(P)\},
\]
where $\mathrm{Neg}_{q_X}(D)$ is the set of prime divisors in the support of $N(D)$, $\mathrm{Null}_{q_X}(P)$ is defined in Proposition \ref{prop1}, and $\mathrm{Neg}_{q_X}(D)$ is the set of the irreducible components of $N(D)$.
\end{defn}
With the following proposition, it turns out that $\Delta^{\mathrm{num}}_E(D)$ is a polygon.

\begin{prop}\label{prop:bodiesarepolygons}
Let $D$ be a big $\mathbf{R}$-divisor on $X$ and $E$ be a prime divisor. Then $\Delta^{\mathrm{num}}_E(D)$ is a polygon.
\begin{proof}
The proof goes as in \cite[Proposition 2.1]{Kuronya1}. By Remark \ref{rmkboundary} we only have to check that along the segment $[0,\mu_E(D)]$ we pass through a finite number of Boucksom-Zariski chambers. Without loss of generality, we can assume that $\nu_E(D)$ (the coefficient of $E$ in $N(D)$) is 0. Set $D':=D-\mu_E(D)E$ and $D'_s:=D'+sE$, for $s \in [0,\mu_E(D)]$. We claim that the function $s\mapsto N(D'_s)$ is non-increasing along $[0,\mu_E(D)]$, i.e.\ if $0 \leq s' < s \leq \mu_E(D)$, $N(D'_{s'})-N(D'_s)$ is effective. If we prove the claim we are done. Indeed, if the function $s\mapsto N(D'_s)$ is non-increasing, then for $s \in [0,\mu_E(D)]$ the negative part of the divisorial Zariski decomposition of $D'_s$ will be supported on some of the irreducible components of $N(D'_0)=N(D')$. In particular, along the segment $[0,\mu_E(D)]$ we will pass through finitely many Boucksom-Zariski chambers (see \cite[Definition 4.14, Corollary 4.17]{Den}). Now, we have
\[
P(D'_{s'})=D'_{s'}-N(D'_{s'})=(D'_s-(s-s')E)-N(D'_{s'}).
\]
As $P(D'_{s'})$ is $q_X$-nef and $P(D'_s)$ is the maximal $q_X$-nef subdivisor of $D'_s$, 
\[
P(D'_s)-P(D'_{s'})=(s-s')E+N(D'_{s'})-N(D'_{s})\]
is effective. Then we are left to show that $E$ is not contained in the support of $N(D'_{s})$, for any $s \in [0,\mu_E(D)]$. Indeed, if $\mathrm{Supp}(E) \subset \mathrm{Supp}(N(D'_{s}))$ for some $s \in [0,\mu_E(D)]$, for any $\lambda>0$ the divisorial Zariski decomposition of $D'_{s+\lambda}$ would be $D'_{s+\lambda}=P(D'_s)+N(D'_s)+\lambda E$, with $P(D'_s)=P(D'_{s+\lambda})$ and $N(D'_{s+\lambda})= N(D'_s)+\lambda E$. This would imply $\mathrm{Supp}(E) \subset \mathrm{Supp}(N(D'_{\mu_E(D)}))=\mathrm{Supp}(N(D))$, and this is absurd.
\end{proof}
\end{prop}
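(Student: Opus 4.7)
\textbf{Proof proposal for Proposition \ref{prop:bodiesarepolygons}.} The plan is to show that the upper boundary of $\Delta^{\mathrm{num}}_E(D)$, namely the graph of the function $\varphi(t) := q_X(P(D_t),E)$ on the segment $[0,\mu_E(D)]$, is piecewise linear with \emph{finitely many} pieces. Combined with Remark \ref{rmkboundary}, which gives piecewise linearity on each Boucksom-Zariski chamber, this yields the polygon conclusion. By \cite[Theorem 1.6]{Den} the big cone decomposes locally finitely into Boucksom-Zariski chambers on which the divisorial Zariski decomposition is linear; hence it suffices to show that the segment $t \mapsto [D_t] \in N^1(X)_{\mathbf{R}}$ meets only finitely many such chambers.

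To this end I would reduce to the case $\nu_E(D) = 0$ (otherwise replace $D$ by $D - \nu_E(D)E$, which preserves $P$ and hence the polygon). Introducing the reparametrization $D'_s := D - (\mu_E(D)-s)E$, so that $D'_0 = D - \mu_E(D)E$ is on the pseudo-effective boundary and $D'_{\mu_E(D)} = D$, I aim to establish the following monotonicity: for $0 \leq s' < s \leq \mu_E(D)$, the divisor $N(D'_{s'}) - N(D'_s)$ is effective. Once this is known, $\mathrm{Supp}(N(D'_s)) \subseteq \mathrm{Supp}(N(D'_0))$, which is a finite set; since the Boucksom-Zariski chamber containing $[D'_s]$ is determined by (the support data of) $N(D'_s)$, the segment meets only finitely many chambers.

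For the monotonicity, I would argue as follows. Writing $P(D'_{s'}) = D'_s - (s-s')E - N(D'_{s'})$ shows $P(D'_{s'})$ is a $q_X$-nef subdivisor of $D'_s$ (the subtracted divisor $(s-s')E + N(D'_{s'})$ being effective). By the characterisation in Remark \ref{Rem:zardecomp}, $P(D'_s) \geq P(D'_{s'})$, hence
\[
P(D'_s) - P(D'_{s'}) = (s-s')E + N(D'_{s'}) - N(D'_s)
\]
is effective. Comparing coefficients on each prime divisor distinct from $E$ already gives the desired inequality on those components; the issue is only the coefficient of $E$ itself in $N(D'_{s'}) - N(D'_s)$.

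The main obstacle, and the real content of the proof, is therefore showing that $E \notin \mathrm{Supp}(N(D'_s))$ for any $s \in [0,\mu_E(D)]$. I would do this by contradiction: if $E$ were an irreducible component of $N(D'_{s_0})$ for some such $s_0$, then for any $\lambda \geq 0$ the decomposition $D'_{s_0+\lambda} = P(D'_{s_0}) + (N(D'_{s_0}) + \lambda E)$ would still satisfy the three defining properties of the divisorial Zariski decomposition (the positive part unchanged remains $q_X$-nef and $q_X$-orthogonal to the irreducible components of the negative part; the enlarged negative part has the same, hence negative-definite, Gram matrix). By uniqueness in Theorem \ref{DivZarDec}, this would force $N(D'_{s_0+\lambda}) = N(D'_{s_0}) + \lambda E$. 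Taking $\lambda = \mu_E(D) - s_0$ gives $E \in \mathrm{Supp}(N(D))$, contradicting our reduction $\nu_E(D) = 0$. This closes the monotonicity and therefore the proof.
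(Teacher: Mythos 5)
Your proposal is correct and follows essentially the same route as the paper: the same reduction to $\nu_E(D)=0$, the same reparametrized segment $D'_s$, the same monotonicity of $s\mapsto N(D'_s)$ proved via the maximal $q_X$-nef subdivisor characterisation, and the same uniqueness-of-Zariski-decomposition contradiction showing $E\not\subset\mathrm{Supp}(N(D'_s))$. The only differences are expository (you spell out why only the $E$-coefficient is at issue and why the perturbed decomposition satisfies all three defining conditions), so no changes are needed.
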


\begin{rmk}
Let $D$ be a big $\mathbf{Q}$-divisor on $X$, $E$ a prime divisor on $X$ and $\nu_E(D)$ the coefficient of $E$ in $N(D)$. Then all the vertices of the polygon $\Delta^{\mathrm{num}}_E(D)$ contained in the set $\{[\nu_E(D), \mu_E(D)[ \times \mathbb{R}\}$ have rational coordinates. Indeed, $\mathrm{Big}(X)$ admits a decomposition into locally rational polyhedral subcones, hence all the break-points with the first coordinate lying in $[\nu_E(D), \mu_E(D)[$ are rational.
\end{rmk}

\begin{prop}
Let $D$ be a big $\mathbf{Q}$-divisor on $X$ and $E$ a prime divisor. Then $\mu_E(D)$ is algebraic.
\end{prop}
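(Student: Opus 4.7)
The plan is to combine the locally rational polyhedral decomposition of $\mathrm{Big}(X)$ into Boucksom-Zariski chambers with the fact that the boundary of $\overline{\mathrm{Eff}(X)}$ is cut out, along movable rays, by the BBF form. By definition of $\mu_E(D)$, the class $[D_{\mu_E(D)}]$ is pseudo-effective but not big, hence $\mathrm{vol}(D_{\mu_E(D)})=0$. Since $\mathrm{vol}(D_t)=\mathrm{vol}(P(D_t))$ and, on a small $\mathbf{Q}$-factorial modification where $P(D_t)$ becomes nef, the Fujiki relation yields $\mathrm{vol}(P(D_t))=c_X\,q_X(P(D_t))^n$, this forces $q_X(P(D_{\mu_E(D)}))=0$.

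Next I would invoke the Boucksom-Zariski chamber decomposition of $\mathrm{Big}(X)$ (\cite[Theorem 1.6]{Den}) together with the finiteness observation already used in the proof of Proposition~\ref{prop:bodiesarepolygons}: the segment $\{[D_t]\}_{t\in[0,\mu_E(D)[}$ meets only finitely many chambers. Thus there exists an exceptional set $S=\{E_1,\dots,E_k\}$ of prime divisors (none equal to $E$, after first subtracting $\nu_E(D)E$ from $D$) such that $[D_t]\in\Sigma_S$ for $t\in\;]\mu_E(D)-\varepsilon,\mu_E(D)[$. On $\Sigma_S$ the negative part $N(D_t)\in\mathrm{span}(S)$ is the unique solution of the linear system $q_X(D_t-N(D_t),E_j)=0$ for $j=1,\dots,k$; as the Gram matrix of $S$ is rational and negative definite, this yields $P(D_t)=A+tB$ with $A,B\in N^1(X)_{\mathbf{Q}}$ on this sub-interval, and by continuity of the divisorial Zariski decomposition (\cite[Proposition 4.31]{Den}) the same affine-linear formula computes $P(D_{\mu_E(D)})=A+\mu_E(D)B$.

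To conclude, $t\mapsto q_X(P(D_t))=q_X(A+tB)$ is a polynomial of degree at most two in $t$ with rational coefficients. It is not identically zero, since it is strictly positive on $]\mu_E(D)-\varepsilon,\mu_E(D)[$ (big classes have positive BBF square on their positive parts), and it vanishes at $t=\mu_E(D)$ by the first paragraph. Hence $\mu_E(D)$ is a root of a non-trivial rational polynomial of degree at most two, and is therefore algebraic (in fact a quadratic irrational at worst). The only subtlety I foresee is verifying that the linear-on-chamber formula for the positive part really does extend continuously up to the boundary value $t=\mu_E(D)$; this, however, is guaranteed by the continuity of the divisorial Zariski decomposition on the whole pseudo-effective cone.
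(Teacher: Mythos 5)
Your argument is correct, but it runs along a different track than the paper's. The paper deduces algebraicity from the area formula of Theorem \ref{thm:1}: after reducing (via the rationality of all vertices with first coordinate in $[\nu_E(D),\mu_E(D)[$) to the case where $\Delta_E^{\mathrm{num}}(D)$ is a triangle, it uses that $\mathrm{vol}_{\mathbf{R}^2}(\Delta_E^{\mathrm{num}}(D))=q_X(P(D))/2$ is rational while the last vertex is $\bigl(\mu_E(D),q_X(P(D_{\mu_E(D)}),E)\bigr)$, so the area expression gives a quadratic equation over $\mathbf{Q}$ satisfied by $\mu_E(D)$. You instead bypass the polygon entirely: on the last Boucksom--Zariski chamber met by the segment (finitely many by the argument of Proposition \ref{prop:bodiesarepolygons}, linearity on each chamber as in Remark \ref{rmkboundary}), $P(D_t)=A+tB$ with $A,B$ rational classes, so $t\mapsto q_X(P(D_t))$ is a rational quadratic, strictly positive for $t<\mu_E(D)$ and vanishing at $t=\mu_E(D)$; hence $\mu_E(D)$ is a root of a nonzero rational quadratic. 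Both proofs rest on the same chamber decomposition and rationality of the divisorial Zariski decomposition; yours is more self-contained (it does not invoke the Euclidean volume computation of Proposition \ref{thmden}), while the paper's is shorter given that Theorem \ref{thm:1} is already available. One small point: to get $q_X(P(D_{\mu_E(D)}))=0$ you do not really need a birational model where $P(D_{\mu_E(D)})$ becomes nef (which is not guaranteed for a boundary class); it is cleaner to argue either by continuity of $\mathrm{vol}$ and of $t\mapsto q_X(P(D_t))$ as $t\to\mu_E(D)$, or directly: $P(D_{\mu_E(D)})$ lies in $\overline{\mathrm{Mov}(X)}$, so $q_X(P(D_{\mu_E(D)}))\geq 0$, and strict positivity would force $P(D_{\mu_E(D)})$, hence $D_{\mu_E(D)}$, to be big, contradicting the definition of the threshold. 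With that adjustment your proof is complete.
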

\begin{proof}
Without loss of generality, we can assume that $\Delta^{\mathrm{num}}_E(D)$ is a triangle. We know by Theorem \ref{thmden} that the volume of $\Delta^{\mathrm{num}}_E(D)$ equals $q_X(P(D))/2$, which is a rational number. The last vertex of $\Delta^{\mathrm{num}}_E(D)$ has coordinates $\left(\mu_E(D),q_X(P(D)-\mu_E(D)E,E)\right)$. Then $\mu_E(D)$ is a solution of a quadratic equation with rational coefficients, hence it is algebraic.
\end{proof}

\begin{proof1}
The theorem is a consequence of Proposition \ref{thmden} and Proposition \ref{prop:bodiesarepolygons}.
\end{proof1}

 The following proposition is analogous to \cite[Proposition 11]{AKL14}.

\begin{prop}
Let $D$ be any big $\mathbf{Q}$-divisor on $X$. There exists a prime divisor $E$ with respect to which $\Delta_E^{\mathrm{num}}(D)$ is a rational simplex.
\end{prop}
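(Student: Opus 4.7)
The plan is to construct $E$ as a very general irreducible member of $|mP(D)|$ for some $m \in \mathbf{Z}_{>0}$ large and sufficiently divisible, so that $[E]=m[P(D)]$ in $N^{1}(X)_{\mathbf{Q}}$ and the divisorial Zariski decomposition of $D-tE$ takes a very clean form along the entire interval $t\in[0,\mu_E(D)]$.

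Since $D$ is a big $\mathbf{Q}$-divisor, so is $P(D)$, and $[P(D)]\in\overline{\mathrm{Mov}(X)}$. Applying the divisorial Zariski decomposition to $P(D)$ itself gives $N(P(D))=0$ by uniqueness, so the asymptotic order of vanishing of $P(D)$ along every prime divisor is zero, and hence $|mP(D)|$ has no fixed divisorial component for $m$ sufficiently large and divisible. The bigness of $P(D)$ allows one to further enlarge $m$ so that the Kodaira map $\phi_{|mP(D)|}\colon X\DashedArrow\mathbf{P}^{N}$ is birational onto its image; after resolving indeterminacy, Bertini's theorem furnishes a general member $E\in|mP(D)|$ which is reduced and irreducible, i.e.\ a prime divisor with $[E]=m[P(D)]$. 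This $E$ does not coincide with any component of $N(D)$ (since $[E]$ is big while such components are exceptional), so $\nu_E(D)=0$; moreover $q_X(E)=m^{2}\,q_X(P(D))>0$, so $E$ is not exceptional and therefore $E\not\subset\mathbf{B}_{+}(D)$ by Proposition~\ref{prop1}.

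For $t\in[0,1/m]$, the identity $[E]=m[P(D)]$ yields
\[
D-tE\equiv (1-tm)\,P(D)+N(D).
\]
The right-hand side satisfies the three characterising properties of Theorem~\ref{DivZarDec}: $(1-tm)P(D)\in\overline{\mathrm{Mov}(X)}$ (as $1-tm\geq 0$ and $P(D)\in\overline{\mathrm{Mov}(X)}$), $N(D)$ is exceptional, and $q_X((1-tm)P(D),E_i)=0$ for every $E_i\in\mathrm{Supp}(N(D))$. By uniqueness it is therefore the divisorial Zariski decomposition of $D-tE$, so $P(D-tE)=(1-tm)P(D)$ on this interval. For $t>1/m$ one computes $q_X(D-tE,P(D))=(1-tm)\,q_X(P(D))<0$, which, pairing against the movable class $P(D)$, forces $D-tE$ to fail to be pseudo-effective; combined with the fact that $D-(1/m)E\equiv N(D)$ lies in $\overline{\mathrm{Eff}(X)}$, this yields $\mu_E(D)=1/m\in\mathbf{Q}_{>0}$. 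Hence the upper boundary of $\Delta_E^{\mathrm{num}}(D)$ is the affine function
\[
t\longmapsto q_X(P(D-tE),E)=m(1-tm)\,q_X(P(D)),
\]
and $\Delta_E^{\mathrm{num}}(D)$ is precisely the triangle with rational vertices $(0,0)$, $(1/m,0)$ and $(0,m\,q_X(P(D)))$, as desired.

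The main technical step is producing the prime divisor $E$ with $[E]=m[P(D)]$: this rests on the fact that $P(D)$ is free of divisorial fixed components in its pluri-systems (built into the divisorial Zariski decomposition via $N(P(D))=0$) combined with the Bertini-type irreducibility statement for the Kodaira map of a big divisor; once such an $E$ is available, the rest of the argument reduces to the simple linear-algebra computation above.
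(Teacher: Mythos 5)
Your proposal is correct and follows essentially the same route as the paper: take $E$ to be a general irreducible member of $|mP(D)|$ for $m\gg 0$ sufficiently divisible (Bertini, using that $|mP(D)|$ has no divisorial fixed part), observe $\nu_E(D)=0$, and use $q_X$-nefness of $P(D)$ to get $\mu_E(D)=1/m$, so that the polygon is the rational triangle with vertices $(0,0)$, $(1/m,0)$, $(0,m\,q_X(P(D)))$. The only cosmetic difference is that you verify the divisorial Zariski decomposition $P(D-tE)=(1-tm)P(D)$ explicitly via uniqueness, whereas the paper phrases the same fact as $[D_t]$ remaining in a single Boucksom-Zariski chamber on $[0,1/m[$.
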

\begin{proof}
Let $D=P(D)+N(D)$ be the divisorial Zariski decomposition of $D$, with $N(D)=\sum_i a_iE_i$, where $a_i\geq 0$ and $E_i$ is a prime divisor, for any $i$. We know that $\mathbf{B}_{\mathrm{div}}(D)=\cup_i \mathrm{Supp}(E_i)$, where $\mathbf{B}_{\mathrm{div}}(D)$ denotes the divisorial stable base locus of $D$. When $k\gg 0$ is sufficiently divisible, $\mathbf{B}_{\mathrm{div}}(kD)$ coincides with the fixed part of the linear series $|kD|$. Now, consider $D'_k=k(D-\sum_ia_iE_i)$, with $k\gg 0$ sufficiently divisible. Then $|D'_k|$ has no fixed components and by the Bertini Theorem, the general member of $|D'_k|$ is an irreducible, reduced divisor. Let $E$ be such a member of $|D'_k|$. Consider $D_t=D-tE$. We have $D_t \equiv D-tk(D-\sum_ia_iE_i)=(1-tk)D+tk \sum_i a_iE_i$ and we observe that $\mu_E(D)=1/k$. Indeed, if $0 \leq t \leq 1/k$, $D_t$ is pseudo-effective (actually, for $0 \leq t < 1/k$, it is big).  We have $q_X(P(D),D_t)=(1-tk)q_X(P(D))$. If for some value of $t$ greater than $1/k$ $D_t$ was effective, we would have $q_X(P(D),D_t)<0$, which is absurd, as $P(D)$ is $q_X$-nef. Then $\mu_E(D)=1/k$. We conclude that, for any $t \in [0,1/k[$, $[D_t]$ lies in a single Boucksom-Zariski chamber and $\Delta_E^{\mathrm{num}}(D)$ is a rational simplex.
\end{proof}

We now globalise Construction \ref{construction1} and prove Theorem \ref{thm:2}.

\begin{proof2}
If $\zeta$ is any class in $N^1(X)_{\mathbf{R}}$ and $t$ is any real number, we set $\zeta_t:=\zeta-tE$. We define \[
\Delta^{\mathrm{num}}_E(X):=\big\{(\zeta,t,y)\in \mathrm{Eff}(X)\times \mathbb{R}^2 \;|\; \nu_E(\zeta) \leq t \leq \mu_E(\zeta), \; 0 \leq y \leq q_X(P(\zeta_t),E) \big\},\]
 where $\nu_E(\zeta)$ is the coefficient of $E$ in $N(\zeta)$. If $(\zeta,t,y) \in \Delta^{\mathrm{num}}_E(X)$, clearly $\lambda \cdot (\zeta,t,y) \in \Delta^{\mathrm{num}}_E(X)$, for any positive real number $\lambda$, and so $\Delta^{\mathrm{num}}_E(X)$ is a cone. We have to check that $\Delta^{\mathrm{num}}_E(X)$ is convex, namely, if $(\zeta',t',y'),  (\zeta',t',y')$ belong to $\Delta^{\mathrm{num}}_E(X)$, we have to show that also $(\zeta+\zeta',t+t',y+y')$ belongs to $\Delta^{\mathrm{num}}_E(X)$. We start by observing that, if $\zeta-\mu_E(\zeta)E$ and $\zeta'-\mu_E(\zeta')E$ lie on the same extremal face of $\overline{\mathrm{Eff}(X)}$, the threshold $\mu_E(\zeta+\zeta')$ is equal to $\mu_E(\zeta)+\mu_E(\zeta')$.
If $\zeta-\mu_E(\zeta)E$ and $\zeta'-\mu_E(\zeta')E$ lie on different extremal faces, the divisor $\zeta+\zeta'-(\mu_E(\zeta)+\mu_E(\zeta'))E$ is big. It follows that  $\mu_E(\zeta+\zeta')\geq \mu_E(\zeta)+\mu_E(\zeta')$.
In particular, we obtain $\nu_E(\zeta+\zeta')\leq t+t'\leq \mu_E(\zeta+\zeta')$. Now, we have to prove that $q_X(P((\zeta+\zeta')_{t+t'}),E)\geq y+y'$ and that the equality $q_X(P((\zeta+\zeta')_{t+t'}),E)=0$ could occur only if $q_X(P(\zeta_t),E)=q_X(P({\zeta'}_{t'}),E)=0$. By definition of $\Delta_E^{\mathrm{num}}(X)$, we have \[E\not\subset \mathbf{B}_{-}(\zeta_t) \cup \mathbf{B}_{-}(\zeta'_{t'})=\mathrm{Supp}(N(\zeta_t))\cup \mathrm{Supp}(N({\zeta'}_{t'})).\] Furthermore $(\zeta+\zeta')_{t+t'}=\zeta_t+\zeta_{t'}$, so that \[P((\zeta+\zeta')_{t+t'})=P(\zeta_t+{\zeta'}_{t'})=P(\zeta_t)+P({\zeta'}_{t'})+N,\] where $N$ is an effective divisor not containing $E$ in its support. Then $q_X(P(\zeta_t+{\zeta'}_{t'}),E)\geq 0$, because $q_X(P(\zeta_t),E)\geq 0$, $q_X(P({\zeta'}_{t'}),E))\geq 0$, since $P(\zeta_t)$ and $P({\zeta'})$ are $q_X$-nef, and $q_X(N,E)\geq 0$, by \cite[Proposition 4.2, item (ii)]{Bouck}), as $E\not\subset \mathrm{Supp}(N)$. Clearly, $q_X(P(\zeta_t+{\zeta'}_{t'}),E)=0$ could occur only if $E$ belongs to $\mathbf{B}_{+}(\zeta_t)$ and $\mathbf{B}_{+}(\zeta'_{t'})$. We conclude that $q_X(P(\zeta_t+{\zeta'}_{t'}),E) \geq y+y' \geq 0$, thus $(\zeta+\zeta',t+t',y+y')\in \Delta_E^{\mathrm{num}}(\zeta+\zeta')$, and we are done.
\end{proof2}

Using Theorem \ref{thm:2}, we recover the log-concavity relation for the volume function on a projective IHS manifold.

\begin{cor}
Let $\zeta,\zeta'$ be two big divisor classes, and $E$ a prime divisor on $X$. We have $\Delta_E^{\mathrm{num}}(\zeta)+\Delta_E^{\mathrm{num}}(\zeta') \subset \Delta_E^{\mathrm{num}}(\zeta+\zeta')$, and hence we obtain the inequality 
\begin{equation}\label{logconcavity}
q_X(P(\zeta+\zeta'))^{1/2}\geq q_X(P(\zeta))^{1/2}+q_X(P(\zeta'))^{1/2}.
\end{equation}
Using the Fujiki relation, we recover the log-concavity relation for the volume function, i.e.\
\begin{equation}\label{logconcavity1}
\mathrm{vol}_X(\zeta+\zeta')^{\frac{1}{2n}}\geq \mathrm{vol_X}(\zeta)^{\frac{1}{2n}}+\mathrm{vol}_X(\zeta')^{\frac{1}{2n}}.
\end{equation}
\end{cor}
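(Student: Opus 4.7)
The plan is to derive this corollary in three essentially formal steps: first establish the Minkowski-sum inclusion by invoking the convexity of the cone $\Delta_E^{\mathrm{num}}(X)$ provided by Theorem \ref{thm:2}, then apply the Brunn--Minkowski inequality in $\mathbf{R}^2$ together with the volume formula of Theorem \ref{thm:1}, and finally translate the resulting BBF inequality into the volume inequality via the Fujiki relation.

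First I would fix a point $(t,y)\in \Delta_E^{\mathrm{num}}(\zeta)$ and a point $(t',y')\in \Delta_E^{\mathrm{num}}(\zeta')$. By construction of the cone $\Delta_E^{\mathrm{num}}(X)$ in the proof of Theorem \ref{thm:2}, the triples $(\zeta,t,y)$ and $(\zeta',t',y')$ both lie in $\Delta_E^{\mathrm{num}}(X)$. Since that cone is convex, their sum $(\zeta+\zeta',\,t+t',\,y+y')$ belongs to it as well, which by definition means $(t+t',y+y')\in \Delta_E^{\mathrm{num}}(\zeta+\zeta')$. This yields the claimed inclusion
\[
\Delta_E^{\mathrm{num}}(\zeta)+\Delta_E^{\mathrm{num}}(\zeta')\subset \Delta_E^{\mathrm{num}}(\zeta+\zeta').
\]

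Next I would take Euclidean areas in $\mathbf{R}^2$. Each of the polygons $\Delta_E^{\mathrm{num}}(\zeta)$, $\Delta_E^{\mathrm{num}}(\zeta')$ is convex (Proposition \ref{thmden} and Proposition \ref{prop:bodiesarepolygons}), so the classical Brunn--Minkowski inequality in dimension two gives
\[
\mathrm{vol}_{\mathbf{R}^2}\!\bigl(\Delta_E^{\mathrm{num}}(\zeta)+\Delta_E^{\mathrm{num}}(\zeta')\bigr)^{1/2} \geq \mathrm{vol}_{\mathbf{R}^2}\!\bigl(\Delta_E^{\mathrm{num}}(\zeta)\bigr)^{1/2}+\mathrm{vol}_{\mathbf{R}^2}\!\bigl(\Delta_E^{\mathrm{num}}(\zeta')\bigr)^{1/2},
\]
and combining this with the inclusion above and with the identity $\mathrm{vol}_{\mathbf{R}^2}(\Delta_E^{\mathrm{num}}(\xi))=q_X(P(\xi))/2$ from Proposition \ref{thmden} immediately produces
\[
q_X(P(\zeta+\zeta'))^{1/2}\geq q_X(P(\zeta))^{1/2}+q_X(P(\zeta'))^{1/2},
\]
after clearing the common factor $1/\sqrt{2}$.

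Finally, I would pass to volumes via the Fujiki relation (\ref{Fujikirelation}). Since $\mathrm{vol}_X(\xi)=\mathrm{vol}_X(P(\xi))=c_X\,q_X(P(\xi))^n$ for any big class $\xi$, taking the $2n$-th root yields $q_X(P(\xi))^{1/2}=c_X^{-1/(2n)}\,\mathrm{vol}_X(\xi)^{1/(2n)}$; substituting into the previous inequality and cancelling $c_X^{-1/(2n)}$ gives exactly (\ref{logconcavity1}). There is essentially no obstacle here beyond verifying that the Brunn--Minkowski inequality can legitimately be applied (which is immediate, both polygons being convex by Proposition \ref{thmden}); the only minor subtlety is keeping track of the constants $c_X$ and the factor $1/2$ so that the final inequalities come out with the stated exponents.
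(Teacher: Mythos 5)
Your proposal is correct and follows essentially the same route as the paper: the inclusion comes from the convexity of the cone $\Delta_E^{\mathrm{num}}(X)$ of Theorem \ref{thm:2}, the inequality (\ref{logconcavity}) from Brunn--Minkowski together with the area formula $\mathrm{vol}_{\mathbf{R}^2}(\Delta_E^{\mathrm{num}}(\xi))=q_X(P(\xi))/2$, and (\ref{logconcavity1}) from the Fujiki relation. You merely spell out the bookkeeping with the constants $c_X$ and the factor $1/2$ that the paper leaves implicit.
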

\begin{proof}
Inequality (\ref{logconcavity}) is obtained by using the Brunn-Minkowski Theorem, while inequality (\ref{logconcavity1}) is obtained from (\ref{logconcavity}) using the Fujiki relation.
\end{proof}

\begin{rmk}
Notice that inequality (\ref{logconcavity1}) was obtained by Lazarsfeld for any projective variety (see \cite[Theorem 11.4.9]{Laz}). His proof makes use of Fujita's approximation Theorem (see \cite[Theorem 11.4.4]{Laz}). Hence, inequality (\ref{logconcavity}) can be directly obtained using the Fujiki relation and Lazarsfeld's result. Our proof (of both inequalities) for IHS manifolds does not make use of Fujita's approximation Theorem.
\end{rmk}

Following \cite{SS16}, we now show that, under certain assumptions, the cone $\Delta_E^{\mathrm{num}}(X)$ is rational polyhedral, providing a set of rational generators.

\begin{prop}\label{generators}
Let $E$ be a non-exceptional prime divisor. Suppose that $\mathrm{Eff}(X)$ is rational polyhedral, as well as the closure of any Boucksom-Zariski chamber of $\mathrm{Big}(X)$. Let $\{D_i\}_i$ be a set of rational generators of the extremal rays of the closure of the Boucksom-Zariski chambers. Then $\Delta_E^{\mathrm{num}}(X)$ is rational polyhedral, generated by the sets $T:=\{([D_i],0,q_X(P(D_i),E)\}_i$, $T':=\{([D_i],0,0)\}_i$, and the element $([E], 1,0)$.
\begin{proof}
We first show that the cone generated by $T$, $T'$ and $([E], 1,0)$ is contained in $\Delta_E^{\mathrm{num}}(X)$. It suffices to show that $([E], 1,0) \in \Delta_E^{\mathrm{num}}(X)$ and \[ ([D_i],0,q_X(P(D_i),E),\;([D_i],0,0) \in \Delta_E^{\mathrm{num}}(X),\] for any $i$. We observe that $\Delta_E(E)$ contains the point $(1,0)$, because $\mu_E(E)=1$, hence $([E], 1,0) \in \Delta_E^{\mathrm{num}}(X)$. The elements of $T$ and $T'$ belong to $\in \Delta_E^{\mathrm{num}}(X)$ by definition of $\Delta_E^{\mathrm{num}}(D_i)$ and because $E \not \subset \mathbf{B}_-(D_i)$, as $E$ is not exceptional. We now show that $\Delta_E^{\mathrm{num}}(X)$ is contained in the cone generated by $T,T'$ and $(E,1,0)$. Let $([D],a,b)$ be an element of $\Delta_E^{\mathrm{num}}(X)$. Set $D_a=D-aE$ and let $D_a=P(D_a)+N(D_a)$ be the divisorial Zariski decomposition of $D_a$. There exists a big and $q_X$-nef divisor $P$ such that $D_a$ belongs to the closure $\overline{\Sigma_P}$ of the Boucksom-Zariski chamber $\Sigma_P$ associated with $P$. Let $\{D_{t_j}\}_j$ be the generators of $\overline{\Sigma_P}$. Then $D_a=\sum_j a_j D_{t_j}$, with $a_j\geq 0$, for any $j$. In particular, $P(D_a)=\sum_j a_j P(D_{t_j})$ and
\[
0\leq b \leq q_X(P(D_a),E)=\sum_ja_jq_X(P(D_j),E).
\]
Then there exists $c\in[0,1]$ such that $b=c \sum_ja_jq_X(P(D_j),E)$.  Some easy calculations show that
\[
([D_a],0,b)=c\sum_ja_j([D_j],0,q_X(P(D_j),E))+(1-c)\sum_ja_j([D_j],0,0).
\]
If we add $(a[E],a,0)$ to both members of the equality above, we obtain
\[
([D],a,b))=c\sum_ja_j([D_j],0,q_X(P(D_j),E))+(1-c)\sum_ja_j([D_j],0,0)+a([E],1,0),
\]
which concludes the proof.
\end{proof}
\end{prop}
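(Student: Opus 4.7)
The plan is to establish the two containments separately: first that the cone $C$ spanned by $T\cup T'\cup\{([E],1,0)\}$ sits inside $\Delta_E^{\mathrm{num}}(X)$, and then conversely that every triple in $\Delta_E^{\mathrm{num}}(X)$ is a non-negative combination of the prescribed generators.

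For the first inclusion, I would exploit the fact (already established in the proof of Theorem \ref{thm:2}) that $\Delta_E^{\mathrm{num}}(X)$ is a convex cone, so it is enough to verify that each proposed generator lies in it. For $([E],1,0)$, a direct computation gives $\mu_E(E)=1$ (since $E-tE=(1-t)E$ is pseudo-effective iff $t\leq 1$) and $q_X(P(E-E),E)=0$, so the point lies in $\Delta_E^{\mathrm{num}}(E)$. For the members of $T$ and $T'$ based at a chamber-generator $D_i$, the non-exceptionality of $E$ is the crucial input: by Theorem \ref{DivZarDec}(2) the negative part $N(D_i)$ of any pseudo-effective divisor is exceptional, so $E$ cannot be a component of $N(D_i)$, giving $\nu_E(D_i)=0$ and making $t=0$ admissible. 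The values $y=0$ and $y=q_X(P(D_i),E)$ are then exactly the endpoints permitted by the defining inequalities of $\Delta_E^{\mathrm{num}}(D_i)$.

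For the reverse inclusion, I would pick $([D],a,b)\in\Delta_E^{\mathrm{num}}(X)$, set $D_a:=D-aE$, and use the hypothesis that the closure $\overline{\Sigma}$ of the Boucksom-Zariski chamber containing $[D_a]$ is rational polyhedral. Thus $[D_a]=\sum_j a_j[D_{t_j}]$ with $a_j\geq 0$ and $D_{t_j}$ among the given chamber-generators. Since the divisorial Zariski decomposition is \emph{linear} on each closed Boucksom-Zariski chamber, one has $P(D_a)=\sum_j a_j P(D_{t_j})$, and consequently
\[
0\leq b\leq q_X(P(D_a),E)=\sum_j a_j\,q_X(P(D_{t_j}),E).
\]
Writing $b=c\sum_j a_j q_X(P(D_{t_j}),E)$ with $c\in[0,1]$, the identity
\[
([D_a],0,b)=c\sum_j a_j\bigl([D_{t_j}],0,q_X(P(D_{t_j}),E)\bigr)+(1-c)\sum_j a_j([D_{t_j}],0,0)
\]
realises the shifted point as a non-negative combination of elements of $T\cup T'$; adding $a([E],1,0)$ on both sides recovers $([D],a,b)$.

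The main obstacle I anticipate is conceptual rather than computational: one has to knit together the rational polyhedrality of the individual Boucksom-Zariski chambers with the linearity of the divisorial Zariski decomposition on each chamber, and verify that the interpolation in the $y$-coordinate respects the defining constraints on every chamber simultaneously. Once the non-exceptionality of $E$ has been used to force $\nu_E(D_i)=0$, the rest is a convex-geometric manipulation. Rationality of the generating set then follows from the rationality of the $D_i$ together with the $\mathbf{Q}$-valuedness of $q_X$ on $N^1(X)_{\mathbf{Q}}$.
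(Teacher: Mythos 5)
Your proposal is correct and follows essentially the same route as the paper: checking the generators $([E],1,0)$ and the elements of $T,T'$ lie in the cone (using that $E$ non-exceptional forces $\nu_E(D_i)=0$), and then decomposing $[D_a]$ in the closed Boucksom-Zariski chamber, exploiting linearity of the divisorial Zariski decomposition there, interpolating in the $y$-coordinate, and adding back $a([E],1,0)$. No substantive differences to report.
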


\section{Minkowski decompositions}
In this section, we prove Theorem \ref{thm:3}. Before proving it, we need the following proposition, which tells us that, up to a translation, the polygon $\Delta_E^{\mathrm{num}}(D)$ of any big $\mathbf{R}$-divisor $D$ with respect to any prime divisor $E$ is determined by the positive part of $D$.

\begin{prop}\label{translation}
Let $D$ be a big $\mathbf{R}$-divisor and $E$ a prime divisor on $X$. Moreover, let $\nu_E(D)$ be the coefficient of $E$ in $N(D)$. Then $\Delta_E^{\mathrm{num}}(D)=\Delta_E^{\mathrm{num}}(P(D))+\overrightarrow{(\nu_E(D),0)}$.
\begin{proof}
Without loss of generality, we can assume that $\nu_E(D)=0$. Hence, we have to show that $\Delta_E^{\mathrm{num}}(D)=\Delta_E^{\mathrm{num}}(P(D))$, and it suffices to show that $P(D-tE)=P(P(D)-tE)$, for any $t \in [0,\mu_E(D)[$. We start by observing that $\mu_E(D)=\mu_E(P(D))$. Indeed, by Proposition \ref{prop:bodiesarepolygons}, $N(D-tE)-N(D)$ is effective, which implies that $P(D)-tE$ is big, for any $t \in [0,\mu_E(D)[$. This implies that $\mu_E(P(D))\geq \mu_E(D)$, and if this inequality is strict, we would have that $D-\mu_E(P(D))E$ is pseudo-effective, contradicting the maximality of $\mu_E(D)$. Then $\mu_E(P(D))= \mu_E(D)$. We now show that $P(P(D)-tE)$ is equal to $P(D-tE)$. We can write \[D-tE=P(D)-tE+N(D)=P(P(D)-tE)+N(P(D)-tE)+N(D).\] If $P(P(D)-tE)\neq P(D-tE)$, there must be an irreducible component $E'$ of $N(D)$ such that $q_X(P(P(D)-tE),E')>0$. Furthermore, $P(D-tE)=P(P(D)-tE)+N$, where $N$ is an effective divisor supported by some of the irreducible components of $N(P(D)-tE)$, by Proposition \ref{prop:bodiesarepolygons}. Since $\mathrm{Supp}(E')\not\subset \mathrm{Supp}(N)$, we have $q_X(P(D-tE),E')>0$, and this gives a contradiction. We conclude then that $P(P(D)-tE)=P(D-tE)$, for any $t\in [0,\mu_E(D)[$. 
\end{proof}
\end{prop}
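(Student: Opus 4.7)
The plan is to reduce to the case $\nu_E(D)=0$ (which makes the translation vector $(\nu_E(D),0)$ vanish and the claim becomes $\Delta_E^{\mathrm{num}}(D)=\Delta_E^{\mathrm{num}}(P(D))$), and then verify the reduced identity directly. The reduction itself is essentially bookkeeping via the extended definition $\Delta_E^{\mathrm{num}}(D):=\Delta_E^{\mathrm{num}}(D-\nu_E(D)E)$ introduced after Construction~\ref{construction1}: subtracting $\nu_E(D)E$ from $D$ shifts the $t$-coordinate by $-\nu_E(D)$ while leaving the positive part of $D$ invariant in the relevant sense, so the horizontal translation in the statement is exactly what rebuilds the original polygon from the reduced one.

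In the reduced case, both $\Delta_E^{\mathrm{num}}(D)$ and $\Delta_E^{\mathrm{num}}(P(D))$ are determined by the $t$-range $[0,\mu_E(\cdot)]$ and the upper boundary $t\mapsto q_X(P((\cdot)_t),E)$. The task therefore reduces to proving $\mu_E(D)=\mu_E(P(D))$ together with the slice identity $P(D-tE)=P(P(D)-tE)$ for every $t\in[0,\mu_E(D))$. The slice identity is the main step: comparing the Zariski decompositions $D-tE=P(D-tE)+N(D-tE)$ and $D-tE=(P(D)-tE)+N(D)$ yields
\[
P(D)-tE \;=\; P(D-tE) + \bigl(N(D-tE)-N(D)\bigr).
\]
I claim this is the divisorial Zariski decomposition of $P(D)-tE$. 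The first summand is $q_X$-nef by definition of $P(D-tE)$; the second is effective thanks to the monotonicity of $t\mapsto N(D-tE)$ established inside the proof of Proposition~\ref{prop:bodiesarepolygons}; its support lies in $\mathrm{Supp}(N(D-tE))$ and is therefore exceptional; and $P(D-tE)$, being $q_X$-orthogonal to every component of $N(D-tE)$, is a fortiori $q_X$-orthogonal to every component of the smaller divisor $N(D-tE)-N(D)$. Uniqueness in Theorem~\ref{DivZarDec} then forces $P(P(D)-tE)=P(D-tE)$, which is the slice identity. The threshold equality drops out as a corollary: one direction holds because the displayed decomposition exhibits $P(D)-tE$ as pseudo-effective for every $t<\mu_E(D)$; the reverse because $(P(D)-tE)+N(D)=D-tE$ is pseudo-effective whenever $P(D)-tE$ is, since $N(D)$ is effective.

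The main obstacle is confirming that the candidate $(P(D-tE),\,N(D-tE)-N(D))$ really is the Zariski decomposition of $P(D)-tE$. The delicate input is the effectiveness of $N(D-tE)-N(D)$, which relies on the monotonicity embedded in the proof of Proposition~\ref{prop:bodiesarepolygons}; the two remaining conditions (exceptionality of the negative part and $q_X$-orthogonality to the positive part) then drop out of the corresponding properties of the Zariski decomposition of $D-tE$, after which uniqueness closes the argument cleanly.
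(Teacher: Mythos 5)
Your proof is correct, and while it follows the same skeleton as the paper's (reduce to $\nu_E(D)=0$, then establish the slice identity $P(D-tE)=P(P(D)-tE)$ using the monotonicity $N(D-tE)\geq N(D)$ from Proposition \ref{prop:bodiesarepolygons}), you close the key step by a genuinely different and cleaner mechanism. The paper argues by contradiction: assuming $P(P(D)-tE)\neq P(D-tE)$, it produces a component $E'$ of $N(D)$ with $q_X(P(P(D)-tE),E')>0$ and derives $q_X(P(D-tE),E')>0$, contradicting the orthogonality of $P(D-tE)$ to the components of its own negative part. You instead exhibit the candidate decomposition $P(D)-tE=P(D-tE)+\bigl(N(D-tE)-N(D)\bigr)$ and verify directly the three conditions of Theorem \ref{DivZarDec} — movability of the first summand, effectiveness and exceptionality of the second (a sub-collection of the components of $N(D-tE)$ is again an exceptional set), and $q_X$-orthogonality — then invoke uniqueness. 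This buys you a positive identification rather than a contradiction, gives the extra information $N(P(D)-tE)=N(D-tE)-N(D)$ for free, and avoids the slightly delicate bookkeeping in the paper about which components support the difference $P(D-tE)-P(P(D)-tE)$. Your treatment of the threshold equality $\mu_E(D)=\mu_E(P(D))$ (one inequality from the displayed decomposition, the other from adding back the effective divisor $N(D)$) is also correct and is left implicit in the paper. The only caveat, which affects the paper's statement rather than your argument, is that $\nu_E(D)$ must be read as the coefficient of $E$ in $N(D)$ (as in the remark following Proposition \ref{thmden}) for the translation formula to hold; with that reading your reduction step is exactly the intended bookkeeping.
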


\subsection{Constructing a Minkowski basis}\label{Minkowskibasis}

\begin{defn}
Given a prime divisor $E$, a Minkowski basis with respect to $E$ is a set $\Omega=\Omega(E)$ of movable $\mathbf{Q}$-divisors (i.e. divisors whose classes lie in $\mathrm{Mov}(X)$) such that for any effective $\mathbf{Q}$-divisor $D$ there exist rational numbers $\{\alpha_P(D)\}_{P\in \Omega}$ such that $D=\sum_P\alpha_P(D)P$ and $\Delta_E^{\mathrm{num}}(D)=\sum_P\alpha_P(D)\Delta_E^{\mathrm{num}}(P)$. 
\end{defn}

In what follows, we provide an algorithmic way to construct a Minkowski basis $\Omega$ with respect to a big prime divisor $E$, when $\mathrm{Eff}(X)$ is rational polyhedral. 
\vspace{0.2cm}

We start by associating with any Boucksom-Zariski chamber of $\mathrm{Big}(X)$ an element of $\Omega$. To do so, let $\Sigma=\Sigma_P$ be the Boucksom-Zariski chamber associated with an integral divisor $P \in \mathrm{Mov}(X)\cap \mathrm{Big}(X)$. 
\medskip

The negative part of the divisorial Zariski decomposition of any element in $\Sigma$ is supported on a set $S=\{E_1,\dots,E_k\}$ of prime exceptional divisors. Let $E$ be a big prime divisor, and consider the vector subspace $W_{S\cup E}$ of $N^1(X)_{\mathbf{R}}$ generated by $S$ and $E$. Then $\left(W_{S\cup E}\right)\cap S^{\perp}$ is a one dimensional vector subspace of $N^1(X)_{\mathbf{R}}$ (because $S\cup E$ spans a $(k+1)$-dimensional vector subspace of $N^1(X)_{\mathbf{R}}$, and $S^{\perp}$ is a $(\rho(X)-k)$-dimensional vector subspace of $N^1(X)_{\mathbf{R}}$), spanned by the class of some $\mathbf{Q}$-divisor of the form $D=xE+\sum_{i=1}^kx_iE_i$. We claim that $x$ and the $x_i$'s are all of the same sign, so that either $D$ or $-D$ lies in $\mathrm{Mov}(X)$. Let $G$ be the Gram-matrix of $S$. By \cite[Lemma 4.1]{Bau} the entries of $S^{-1}$ are non-positive. Then, for a fixed $x \neq 0$, the solution of the linear system
\[
S(x_1,\dots,x_k)^{T}=-x\left(q_X(E,E_1),\dots,q_X(E,E_k)\right)^{T}
\]
 has the same sign of that of $x$, because $q_X(E,E_j)\geq 0$, for any $j=1,\dots,k$. Choosing $x$ rational and positive, we obtain a rational generator $D_{\Sigma}$ of $\left(W_{S\cup E}\right)\cap S^{\perp}$ which lies in $\mathrm{Mov}(X)$. Up to clearing the denominators of $D_{\Sigma}=xE+\sum_{i=1}^kx_iE_i$, we can assume that $D_{\Sigma}$ is integral. We observe that $D_{\Sigma}$ is big, because $E$ is, and $\sum_{i=1}^kx_iE_i$ is effective.
\vspace{0.2cm}
 
We can now describe a Minkowski basis $\Omega$. For any Boucksom-Zariski chamber $\Sigma$ we add to $\Omega$ the integral divisor $D_{\Sigma}$ constructed above, and for any extremal ray $R$ of $\mathrm{Mov}(X)$ of BBF square 0, we add to $\Omega$ an integral generator of $R$.
\vspace{0.2cm} 

We observe that since we are assuming that $\mathrm{Eff}(X)$ is rational polyhedral (and so also $\mathrm{Mov}(X)$ is), $\Omega$ is finite. Furthermore, $\mu_E(D_{\Sigma})=x$, and, for any $0<t<\mu_E(D)$, $D_t$ belongs to $\Sigma$, by construction.
\vspace{0.2cm}

We are now ready to prove Theorem \ref{thm:3}. Recall that a set $S$ of prime exceptional divisors on a projective IHS manifold $X$ is called an exceptional block if either $S=\emptyset$, or $S=\{E_1,\dots,E_k\}$ and the Gram matrix $(q_X(E_i,E_j))_{i,j}$ is negative-definite.

\begin{proof3}
Without loss of generality, by Proposition \ref{translation}, we can assume that $D=P(D)$, in particular that $D$ is movable. If $D$ is not big, $q_X(D)=0$, and so $\lambda D= D'$ for some $D' \in \Omega$, $\lambda \in \mathbf{Q}$. Then $\Delta_E^{\mathrm{num}}(D)=\frac{1}{\lambda}\Delta_E^{\mathrm{num}}(D')$. Otherwise, let $\Sigma$ be the Boucksom-Zariski chamber associated with the exceptional block $\mathrm{Null}_{q_X}(D)$ (see \cite[Corollary 4.17]{Den}), and $D_{\Sigma}$ the element of $\Omega$ attached to $D$. Define the movable threshold
\[
\tau=\tau(D):=\mathrm{sup}\left\{t\geq 0 \; | \; D-t D_{\Sigma} \text{ is movable}\right\}
\]
and set $D':=D-\tau D_{\Sigma}$ and observe that $D'$ belongs to $\mathrm{Face}(D):=\mathrm{Null}_{q_X}(D)^{\perp} \cap \mathrm{Mov}(X)$. Moreover, we claim that $\tau$ is a rational number. To show it, we first observe that, either $q_X(D')=0$, or $D'$ is big and $\mathrm{Null}_{q_X}(D)\subsetneq \mathrm{Null}_{q_X}(D')$. Indeed, if $D'$ is big and $\mathrm{Null}_{q_X}(D)= \mathrm{Null}_{q_X}(D')$, there would exist $0<\epsilon \ll 1$ such that $D-(\tau+\epsilon)D_{\Sigma}$ lies in $\mathrm{Mov}(X)$, by \cite[Proposition 4.28]{Den}, and this would contradict the maximality of $\tau$.

If $D'=0$, $D=\tau D_{\Sigma}$, and so $\tau$ is rational. If $D' \neq 0$ and $q_X(D')=0$, we have $D'=aM$, where $M$ is a $\mathbf{Q}$-divisor and $a$ is a positive real number. Then $D=aM+\tau D_{\Sigma}$, and so
\[
q_X(D,M)=\tau q_X(D_{\Sigma},M)>0.
\]
This forces $\tau$ to be rational. 

If $D'$ is big, pick a prime divisor $E$ lying in $\mathrm{Null}_{q_X}(D') \setminus \mathrm{Null}_{q_X}(D)$. Then we have
\[
q_X(D,E)=\tau q_X(D_{\Sigma},E),
\] 
and so $\tau$ is rational.

Now, if $D'=0$, we have $\Delta_E^{\mathrm{num}}(D)=\tau \Delta_E^{\mathrm{num}}(D_{\Sigma})$, and  we are done. Otherwise we claim that
\begin{equation}\label{Minkowskidecomp}
\Delta_E^{\mathrm{num}}(D)=\Delta_E^{\mathrm{num}}(D')+\tau\Delta_E^{\mathrm{num}}(D_{\Sigma}).
\end{equation}
To show equality (\ref{Minkowskidecomp}) we first observe that if $0\leq t \leq \mu_E(\tau D_{\Sigma})$, the divisor $D-tE=D'+(\tau D_{\Sigma} -tE)$ is effective. Indeed, $D'$ is effective, since we are assuming that $\mathrm{Eff}(X)$ is rational polyhedral, and \[
\tau D_{\Sigma}-tE=(\tau x-t)E+\sum_{i}x_iE_i \] is effective, as $\mu_E(\tau D_{\Sigma})=\tau x$. Also, note that $\mu_E(D)\geq \mu_E(\tau D_{\Sigma})$. Then, for any $0\leq t \leq \mu_E(\tau D_{\Sigma})$, we have
\[
P(D-tE)=P(D'+\tau D_{\Sigma}-tE)=P(D')+P(\tau D_{\Sigma}-tE).
\]
Now, let $t$ be such that $\mu_E(\tau D_{\Sigma})\leq t \leq \mu_E(D)$. If $\mu_E(\tau D_{\Sigma})=\mu_E(D)$, $D'$ is not big, and we are done. Otherwise $D'$ is big, and, arguing as in Proposition \ref{prop:bodiesarepolygons}, we see that $D'-(t-\mu_E(\tau D_{\Sigma}))E$ is big for any $t \in [\mu_E(\tau D_{\Sigma}),\mu_E(D)[$, and $N(D-tE)\geq \sum_ix_iE_i$. In particular, 
\[
P(D-tE)=P(D'-(t-\mu_E(\tau D_{\Sigma}))E),
\] 
for any $t$ in $[\mu_E(\tau D_{\Sigma}),\mu_E(D)]$. Resuming, we have

\[
P(D_t)=\begin{cases}
P(D')+P(\tau D_{\Sigma}-tE), & \text{ if } 0\leq t \leq \mu_E(\tau D_{\Sigma}),\\
P(D'-(t-\mu_E(\tau D_{\Sigma}))E), & \text{ if } \mu_E(\tau D_{\Sigma})\leq t \leq \mu_E(D),
\end{cases}
\]

and so 

\[
P(D_t)=\begin{cases}
P(D')+(\tau x -t)E , & \text{ if } 0\leq t \leq \tau x,\\
P(D'-(t-\mu_E(\tau D_{\Sigma}))E), & \text{ if } \tau x\leq t \leq \mu_E(D).
\end{cases}
\]
Then
\[
q_X(P(D_t),E)=\begin{cases}
q_X(P(D'),E)+q_X(P(\tau D_{\Sigma}-tE),E), & \text{ if } 0\leq t \leq \mu_E(\tau D_{\Sigma}),\\
q_X(P(D'-(t-\mu_E(\tau D_{\Sigma}))E),E), & \text{ if } \mu_E(\tau D_{\Sigma})\leq t \leq \mu_E(D),
\end{cases}
\]
 that means $\Delta_E^{\mathrm{num}}(D)=\Delta_E^{\mathrm{num}}(D')+\tau\Delta_E^{\mathrm{num}}(D_{\Sigma})$.
 \vspace{0.1cm}
 
 Then we can consider the Boucksom-Zariski chamber associated with $D'$, which is different to that associated with $D$, by \cite[Corollary 4.17]{Den}. We can repeat the above procedure to decompose $\Delta^{\mathrm{num}}_E(D')$, as we did for $\Delta^{\mathrm{num}}_E(D)$. This can be done finitely many times because at any step the dimension of $\mathrm{Face}(D')=\mathrm{Null}_{q_X}(D')^{\perp}\cap \mathrm{Mov}(X)$ decreases of at least $1$, and the dimension of $\mathrm{Face}(B)$, where $B$ is any big and movable divisor, is at most $\rho(X)$.
 
\end{proof3}

If we allow a Minkowski basis to be infinite and we suppose that $\overline{\mathrm{Eff}(X)}=\mathrm{Eff}(X)$ (which does not imply that $\mathrm{Eff}(X)$ is polyhedral), Theorem \ref{thm:3} holds in more generality. In particular, we have the following corollary.

\begin{cor}\label{finalcor}
Let $E$ be any big prime divisor on $X$ and suppose that $\overline{\mathrm{Eff}(X)}=\mathrm{Eff}(X)$. Then there exists a set $\Omega=\Omega(E)$ of $\mathbf{Q}$-divisors whose classes lie in $\overline{\mathrm{Mov}(X)}$, such that, if $D$ is any big $\mathbf{Q}$-divisor, there exist some rational numbers $\{\alpha_P(D)\}_{P\in \Omega}$ such that $P(D)=\sum_{P\in \Omega} \alpha_P(D)P$ and, up to a translation, $\Delta_E^{\mathrm{num}}(D)=\sum_{P\in \Omega}\alpha_P(D)\Delta_E^{\mathrm{num}}(P)$. In particular, for any big $\mathbf{Q}$-divisor $D$, the polygon $\Delta_E^{\mathrm{num}}(D)$ is rational.
\end{cor}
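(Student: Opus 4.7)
The plan is to push through the proof of Theorem \ref{thm:3} essentially verbatim, allowing the Minkowski basis $\Omega$ to be infinite and invoking the hypothesis $\EffX = \mathrm{Eff}(X)$ at the one step where it is genuinely needed. Specifically, to each Boucksom-Zariski chamber $\Sigma$ of $\mathrm{Big}(X)$ I would associate the rational movable divisor $D_\Sigma$ generating $(W_{S \cup E}) \cap S^{\perp}$ exactly as in Subsection \ref{Minkowskibasis}, and include in $\Omega$ a rational generator of every extremal ray of $\MovX$ of zero BBF square. The construction of $D_\Sigma$ only uses the negative definiteness of the Gram matrix of $S$, which is built into the definition of a Boucksom-Zariski chamber, so nothing depends on polyhedrality; the only novelty with respect to the setting of Theorem \ref{thm:3} is that the number of Boucksom-Zariski chambers, and hence $\Omega$, is infinite in general.

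Given a big $\mathbf{Q}$-divisor $D$, I would first reduce to the case $D = P(D)$ via Proposition \ref{translation}, identify the Boucksom-Zariski chamber $\Sigma$ of $D$ together with its associated $D_\Sigma \in \Omega$, and set
\[
\tau(D) := \sup\{t \geq 0 : D - t D_\Sigma \text{ is movable}\}, \qquad D' := D - \tau(D) D_\Sigma.
\]
The rationality of $\tau(D)$ follows exactly as in the proof of Theorem \ref{thm:3}, by pairing $D = \tau D_\Sigma + D'$ with a prime divisor in $\mathrm{Null}_{q_X}(D') \setminus \mathrm{Null}_{q_X}(D)$ (or, if $q_X(D') = 0$, with a rational generator of the ray spanned by $D'$); neither computation uses polyhedrality of $\mathrm{Eff}(X)$. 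The Minkowski identity
\[
\Delta_E^{\mathrm{num}}(D) = \Delta_E^{\mathrm{num}}(D') + \tau \Delta_E^{\mathrm{num}}(D_\Sigma)
\]
is then derived by the same piecewise computation of $P(D_t)$ over $[0, \mu_E(\tau D_\Sigma)]$ and $[\mu_E(\tau D_\Sigma), \mu_E(D)]$, and the procedure terminates in at most $\rho(X)$ steps because $\dim \mathrm{Face}(D')$ strictly decreases at each iteration.

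The single place where the new hypothesis enters is in verifying that $D'$ is genuinely effective rather than merely pseudo-effective: by construction $D'$ is movable, hence lies in $\EffX$, and the assumption $\EffX = \mathrm{Eff}(X)$ promotes it to an effective $\mathbf{Q}$-divisor, which is what enables the decomposition $D - tE = D' + (\tau D_\Sigma - tE)$ as a sum of effective divisors for $0 \leq t \leq \mu_E(\tau D_\Sigma)$. Rationality of $\Delta_E^{\mathrm{num}}(D)$ then follows from the Minkowski decomposition together with the rationality of each $\Delta_E^{\mathrm{num}}(D_\Sigma)$: the breakpoints in $t$ come from the locally rational polyhedral structure of the chamber decomposition, $\mu_E(D_\Sigma) = x \in \mathbf{Q}$ by construction, and the heights $q_X(P(\cdot), E)$ are rational, so a Minkowski sum of finitely many rational polygons with rational coefficients remains rational. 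The main obstacle is purely bookkeeping: one must check that no step in the proof of Theorem \ref{thm:3} covertly relies on the global finiteness of $\Omega$, which is straightforward since the algorithm terminates after finitely many steps for each fixed $D$.
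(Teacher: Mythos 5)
Your proposal is correct and follows essentially the same route as the paper, whose own proof of this corollary is simply the observation that the argument of Theorem \ref{thm:3} goes through verbatim once $\Omega$ is allowed to be infinite. You additionally make explicit the two points the paper leaves implicit, namely that the hypothesis $\overline{\mathrm{Eff}(X)}=\mathrm{Eff}(X)$ is only needed to promote the movable divisor $D'$ to an effective one, and that finiteness of $\Omega$ is never used because the algorithm terminates after at most $\rho(X)$ steps for each fixed $D$.
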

\begin{proof}
The proof is the same as that of Theorem \ref{thm:3}.
\end{proof}

\begin{rmk}
 Theorem \ref{thm:3} and Corollary \ref{finalcor} hold  also for any big $\mathbf{R}$-divisor $D$ on $X$. The only difference is that the coefficients $\alpha_P(D)$ appearing in the Minkowski decomposition might not be rational.
\end{rmk}

\section{Examples}\label{examples}
We start by computing some polygons of Newton-Okounkov type on Hilbert schemes of points on K3 surfaces.

\begin{ex}
Let $X$ be $\mathrm{Hilb}^2(S)$, where $S$ is a projective K3 surface with $\mathrm{Pic}(S)=\mathbf{Z}\cdot H_S$ and $H_S^2=2$. Now, let $E'$ be a general member of the linear system $|H-\delta|$, and $E$ the exceptional divisor of the Hilbert-Chow morphism $S^{[2]}\to S^{(2)}$. 
We compute the polygons $\Delta_{E'}^{\mathrm{num}}(3H-E)$, $\Delta_E^{\mathrm{num}}(H)$. Consider $\Delta_{E'}^{\mathrm{num}}(3H-E)$. We have $\mu_{E'}(3H-E)=3$, because $3H-E-3(H-\frac{1}{2}E)=\frac{3}{2}E$ is effective, but not big. Then
\[
P(3H-E-tE')=\begin{cases}
3H-E-tE', & \text{ if } 0\leq t \leq 2,\\
(3-t)H, & \text{ if } 2 \leq t \leq 3.
\end{cases}
\]
It follows that 
\[ \Delta_{E'}^{\mathrm{num}}(3H-E)=\left\{ (t,y) \in \mathbf{R}^2 \;|\; 0 \leq t \leq 2, \; 0\leq y \leq 2 \text{ or } 2 \leq t \leq 3, \; 0\leq y \leq 6-2t \right\},\]
which is a trapezium. In this case, the euclidean volume of $\Delta_{E'}^{\mathrm{num}}(3H-E)$ is $q_X(3H-E)/2=5$.
\begin{figure}
\begin{tikzpicture}
    \draw[gray!50, thin, step=0.5] (0,0) grid (4,3);
    \draw[very thick,->] (0,0) -- (4.4,0) node[right] {$t$};
    \draw[very thick,->] (0,0) -- (0,3.4) node[above] {$y$};
    \foreach \x in {1,...,4} \draw (\x,0.05) -- (\x,-0.05) node[below] {\tiny\x};
    \foreach \y in {1,...,3} \draw (-0.05,\y) -- (0.05,\y) node[left] {\tiny\y};

    \fill[green,opacity=0.5] (0,2) -- (2,2) -- (3,0) -- (0,0);

    \draw (0,2) -- (2,2);
    \draw (2,2) -- (3,0);
    
    \node at (1.2,1) {$\scriptstyle\Delta_{E'}^{\mathrm{num}}(3H-E)$};
\end{tikzpicture}
   \caption{The polygon $\scriptstyle\Delta_{E'}^{\mathrm{num}}(3H-E)$.}
\end{figure}
We now compute $\Delta_E^{\mathrm{num}}(H)$. We have $\mu_E(H)=1/2$, because $P(H_t)=H_t=H-tE$ for any $t\in[0,1/2]$, and $H-\frac{1}{2}E=H-\delta$ is not big, but pseudo-effective. Then
\[
\Delta_E^{\mathrm{num}}(H)=\left\{ (t,y) \in \mathbf{R}^2 \;|\; 0 \leq t \leq 1/2, \; 0\leq y \leq 8t \right\}
\]
is a triangle. The volume of $\Delta_E^{\mathrm{num}}(H)$ is given by $q_X(H)/2=1$.
\medskip

We now provide a set of generators $G$ for $\Delta_E^{\mathrm{num}}(X)$. We have 2 Boucksom-Zariski chambers, $\Sigma_H$ and $\Sigma_M$, where $M$ is any movable divisor. Furhermore, $\overline{\Sigma_M}=\langle H-\delta,H \rangle$, $\overline{\Sigma_H}=\langle H,\delta\rangle$. In particular, $\mathrm{Eff}(X)$ is rational polyhedral, as well as the Boucksom-Zariski chambers. Then we can use Proposition \ref{generators} and pick \[G=\{([2H-E],0,0),([H],0,0),([E],0,0),([2H-E],0,8),([E],1,0)\}.\] 
\end{ex}

We now give an example of a K3 surface $S$ such that the classical Newton-Okounkov body $\Delta_{F_{\bullet}}(D)$ of any big $\mathbf{Q}$-divisor $D$ with respect to a general admissible flag $F_{\bullet}=\left\{\{pt\}\subset C \subset S\right\})$ is a rational polygon. In this case $\Delta_{F_{\bullet}}(D)=\Delta_C^{\mathrm{num}}(D)$, for any big $\mathbf{R}$-divisor $D$.

\begin{ex}
Let $S$ be a K3 surface such that $\mathrm{Pic}(S)$ is isomorphic to $\mathbf{Z}^3$, with intersection form given by the matrix
\begin{equation*}
M = 
\begin{pmatrix}
0 & 1 & 1 \\
1 & -2 & 0 \\
1 & 0 & -2 
\end{pmatrix}
\end{equation*}
One can show (see \cite{Totaro}) that $S$ has a unique elliptic fibration  $S\to \mathbf{P}^1 $, induced by some nef line bundle $L$, and infinitely many smooth rational curves, each of which is a section of $S\to \mathbf{P}^1$. The ray spanned by $L$ in $\mathrm{Nef}(S)$ is extremal, and is the only extremal ray of square $0$ of the nef cone. Let $C\subset S$ be a big curve. Applying Corollary \ref{finalcor}, we find an \textit{infinite} set of nef $\mathbf{Q}$-divisors $\Omega$, such that any nef $\mathbf{Q}$-divisor is a linear combination (with coefficients in $\mathbf{Q}$) of elements in $\Omega$, and any polygon of Newton-Okounkov type $\Delta_C^{\mathrm{num}}(D)$ is, up to translation, the Minkowski sum (with coefficients in $\mathbf{Q}$) of some of the polygons ${\{\Delta_C^{\mathrm{num}}(P)\}}_{P \in \Omega}$. If we choose $\{pt\}$ to be a general smooth point of $C$ and set
\[
F_{\bullet}:=\{\{pt\} \subset C\subset S\},
\]
we clearly have $\Delta_C^{\mathrm{num}}(D)=\Delta_{F_{\bullet}}(D)$, for any big $\mathbf{R}$-divisor $D$. In particular, since any of the $\Delta_C^{\mathrm{num}}(D)$ is rational whenever $D$ is rational, also any of the $\Delta_{F_{\bullet}}(D)$ is.
\end{ex}

The following example shows that the bodies $\Delta_{C}^{\mathrm{num}}(D)$  on a projective K3 surface $S$ are in some sense preserved when passing to the Hilbert scheme of points $S^{[n]}$.
\begin{ex}
Let $S$ be a projective K3 surface and $F_{\bullet}=\left\{\{pt\}\subset C \subset S \right\}$ a general admissible flag. Let $D$ be any big $\mathbf{R}$-divisor on $S$. By the generality of $F_{\bullet}$ and \cite[Theorem 6.4]{Mustata}, the classical Newton-Okounkov body $\Delta_{F_{\bullet}}(D)$ is 
\[
\Delta_{F_{\bullet}}(D)=\Delta_C^{\mathrm{num}}(D)=\left\{(t,y) \in \mathbf{R}^2 \; | 0\leq t \leq \mu_C(D), \; 0\leq y \leq P(D_t)\cdot C\right\}.
\]
Now, let $X=S^{[n]}$ be the Hilbert scheme of $n$ points on $S$. Any irreducible curve $C'$ on $S$ is canonically associated with a prime divisor on $X$. This correspondence induces a homomorphism $\iota \colon \mathrm{Div}_{\mathbf{R}} (S) \to  \mathrm{Div}_{\mathbf{R}}(X)$, which in turn induces the usual injective group homomorphism $\iota' \colon \mathrm{Pic}(S) \hookrightarrow \mathrm{Pic}(X)$, which embeds $\mathrm{Pic}(S)$ in $\mathrm{Pic}(X)$ as a sublattice. In particular, ${C'}^2=q_X\left(\iota'([C'])\right)$, and to any exceptional block on $S$ (i.e.\ the Gram matrix of the elements of $S$, with respect to $q_X$,  is negative definite) corresponds an exceptional block on the Hilbert scheme $X$. Furthermore, $\iota$ preserves the divisorial Zariski decomposition of effective divisors, i.e.\, if $D'$ is an effective divisor on $S$, with (divisorial) Zariski decomposition $P(D')+N(D')$, $\iota(P(D'))+\iota(N(D'))$ is the divisorial Zariski decomposition of $\iota(D')$. Indeed, it suffices to check that the class of $\iota(P(D'))$ belongs to $\overline{\mathrm{Mov}(X)}$. Write $P(D')=\sum_ia_iE_i$, with $a_i>0$ and $E_i$ prime for any $i$. Then $\iota(P(D'))$ is effective, hence we are left to check that $\iota(P(D'))$ $q_X$-intersect non-negatively any prime exceptional divisor on $X$. But this follows from the fact that $P(D')$ is nef on $S$ and \cite[Proposition 4.2, item (ii)]{Bouck}. It follows that a divisor $D'$ on $S$ is big if and only if $\iota(D')$ is because $\iota$ preserves the divisorial Zariski decomposition, and by \cite[Proposition 3.8]{Huy1}. Then $\mu_C(D)=\mu_{\iota(C)}(\iota(D))$, and since $\iota'$ respects the BBF forms of $S$ and $X$, we obtain the equality $\Delta_{F_{\bullet}}(D)=\Delta_{E_C}^{\mathrm{num}}(\iota(D))$. In particular, the natural injective linear map $N^1(S)_{\mathbf{R}} \times \mathbf{R}^2 \to N^1(X)_{\mathbf{R}} \times \mathbf{R}^2$ maps the cone $\Delta_{F_{\bullet}}(S)=\Delta_C^{\mathrm{num}}(S)$ into $\Delta_{\iota(C)}^{\mathrm{num}}(X)$.

\end{ex}
 
 The next example shows that the polygons $\Delta_{E}^{\mathrm{num}}(D)$ might be irrational. 
 
 \begin{ex}
 Consider the Fano variety of lines $F=F(Y)$ of a smooth cubic fourfold $Y$ containing a smooth cubic scroll $T$ with
$$H^4(Y,\mathbf{Z}) \cap H^{2,2}(Y,\mathbf{C})=\mathbf{Z} h^2 + \mathbf{Z} [T].$$ Here $h^2$ is the square of the hyperplane class $h$ of the cubic fourfold. By the work of Beauville and Donagi, $F$ is a projective IHS manifold of dimension $4$. We have $\mathrm{Big}(F)=\mathrm{Eff}(F)=\mathrm{Mov}(F)$ (see \cite{HT2010} and \cite[Theorem 1.2]{Den2}), and $\overline{\mathrm{Eff}(F)}$ is irrational. Then, there exists a big integral divisor $D$ on $X$, and a prime divisor $E$ such that $\Delta_{E}^{\mathrm{num}}(D)$ is an irrational trapezium. Indeed, let $\tau$ be $\alpha([T])$,  where $\alpha \colon H^4(Y,\mathbf{Z})\to H^2(F,\mathbf{Z})$ is the Abel-Jacobi map. Consider also $g=\alpha(h^2)$. Note that $q_F(g)=q_F(g,\tau)=6$, and $q_F(\tau)=2$ (see \cite{HT2010}). The pseudo-effective cone of $F$ is $\overline{\mathrm{Eff}(F)}=\mathrm{cone}\left(g-\left(3-\sqrt6\right)\tau,\left(3+\sqrt6\right)\tau-g\right)$. Let $k$ be a positive integer such that $k\tau$ is represented by a prime divisor, and consider $\Delta_{k\tau}^{\mathrm{num}}(kg)$. Then $\mu_{k\tau}(kg)=3-\sqrt{6}$ is an irrational number. Observe that $q_X(kg-\mu_{k\tau}(g)k\tau,k\tau)=k^2 2 \sqrt{6}>0$, hence $\Delta_{k\tau}^{\mathrm{num}}(kg)$ is a trapezium. To show that $\Delta_{k\tau}^{\mathrm{num}}(kg)$ is irrational it suffices to find an irrational vertex. But this is easy: just take $(\mu_{k\tau}(kg),0)=(3-\sqrt{6},0) \in \Delta_{k\tau}^{\mathrm{num}}(kg)$.
 \end{ex}
 Assuming in the example above that $k=1$, we provide below a picture of the polygon $\Delta_{\tau}^{\mathrm{num}}(g)$.
 
  \clearpage

 \begin{figure}
\begin{tikzpicture}[scale=0.80]
    \draw[gray!50, thin, step=1] (0,0) grid (3,6.5);
    \draw[very thick,->] (0,0) -- (3.4,0) node[right] {$t$};
    \draw[very thick,->] (0,0) -- (0,6.4) node[above] {$y$};
    \foreach \x in {1} \draw (\x,0.05) -- (\x,-0.05) node[below] {\tiny\x};
    \foreach \y in {1,...,6.4} \draw (-0.05,\y) -- (0.05,\y) node[left] {\tiny\y};

    \fill[violet,opacity=0.5] (0,6) -- (2.4494, 1.1012) -- (2.4494,0) -- (0,0);

    \draw (0,6) -- (2.4494, 1.1012);
    \draw (2.4494, 1.1012) -- (2.4494,0);
    
    \node at ((2.4494,-0.32) {$\scriptscriptstyle\mu_{\tau}(g)$};
    \node at (1.2,1.5) {$\scriptstyle\Delta_{\tau}^{\mathrm{num}}(g)$};
\end{tikzpicture}
   \caption{The polygon $\scriptstyle\Delta_{\tau}^{\mathrm{num}}(g)$.}
\end{figure}
 
\printbibliography

@book{Laz,
    author  = "Robert Lazarsfeld",
    title   = "Positivity in algebraic geometry I \& II",
    year    = "2004",
    publisher  = "Springer Nature",
}

@article {RS22,
    AUTHOR = {Ro\'e, Joaquim and Szemberg, Tomasz},
     TITLE = {On the number of vertices of {N}ewton-{O}kounkov polygons},
   JOURNAL = {Rev. Mat. Iberoam.},
    VOLUME = {38},
      YEAR = {2022},
    NUMBER = {5},
     PAGES = {1383--1397}
}

@article{OGrady,
     author=  "Kieran Gregory O'Grady",
   title=   "The weight-two {H}odge structure of moduli spaces of sheaves on
a {$K3$} surface",
   year=    "1997",
   pages=   "599-644",
   journal= "J. Algebraic Geom.",
   volume=  "6"
}

@online{KMPP19,
      title={On the Boucksom-Zariski decomposition for irreducible symplectic varieties and bounded negativity}, 
      author={Michał Kapustka and Giovanni Mongardi and Gianluca Pacienza and Piotr Pokora},
      year={2019},
      url={https://arxiv.org/abs/1911.03367}
}

@article{BCK09,
author = {Bauer, Thomas and Caibar, Mirel and Kennedy, Gary},
year = {2009},
month = {11},
pages = {},
title = {Zariski Decomposition: A New (Old) Chapter of Linear Algebra},
volume = {119},
journal = {The American Mathematical Monthly},
doi = {10.4169/amer.math.monthly.119.01.025}
}

@article{Totaro,
  title={Algebraic surfaces and hyperbolic geometry},
  author={Burt Totaro},
  year={2012},
  journal={Current Developments in Algebraic Geometry},
  note={MSRI Publications 59, Cambridge}
  }

@article{DO23,
author = {Denisi, Francesco Antonio and Ortiz, \'{A}ngel David R\'{\i}os},
title = {Asymptotic base loci on hyper-Kähler manifolds},
journal = {Commun. Contemp. Math.},
volume = {0},
number = {0},
pages = {2450026},
year = {0},
doi = {10.1142/S0219199724500263}
}

@article {HT2010,
    AUTHOR = {Hassett, Brendan and Tschinkel, Yuri},
     TITLE = {Flops on holomorphic symplectic fourfolds and determinantal cubic hypersurfaces},
   JOURNAL = {J. Inst. Math. Jussieu},
    VOLUME = {9},
      YEAR = {2010},
    NUMBER = {1},
     PAGES = {125--153}
}

@article{Matsumura,
     author = {Matsumura, Shin-ichi},
     title = {Restricted volumes and divisorial Zariski decompositions},
     journal = {Amer. J. Math. },
     volume = {135},
     pages ={637-662},
     number = {3},
     year = {2013},
}

@article{LSS14,
author={Patrycja Łuszcz-Świdecka and David Schmitz},
title = {Minkowski decomposition of Okounkov bodies on surfaces},
journal = {J. Algebra},
volume = {414},
pages = {159-174},
year = {2014},
}

@article{SS16,
title = {On the polyhedrality of global Okounkov bodies},
author = {David Schmitz and Henrik Seppänen},
pages = {83--91},
volume = {16},
number = {1},
journal = {Adv. Geom.},
year = {2016},
}

@online{Den2,
    AUTHOR = {Francesco Antonio Denisi},
     TITLE = {Pseudo-effective classes on projective irreducible holomorphic symplectic manifolds},
     year = {2022},
  NOTE ={To appear in \emph{Ann. Inst. Fourier (Grenoble)}}
  }

@article{Bau,
    author  = "Thomas Bauer and Alex Küronya and Tomasz Szemberg",
    title   = "Zariski chambers, volumes, and stable base loci",
    journal = "J. Reine Angew. Math.",
    year    = "2004",
    volume  = "576",
    pages   = "209-233"
}

@article {Den,
    AUTHOR = {Denisi, Francesco Antonio},
     TITLE = {Boucksom-{Z}ariski and {W}eyl chambers on irreducible
              holomorphic symplectic manifolds},
   JOURNAL = {Int. Math. Res. Not. IMRN},
  FJOURNAL = {International Mathematics Research Notices. IMRN},
      VOLUME = {2023},
      YEAR={2022},
    NUMBER = {20},
     PAGES = {17156--17204},
       DOI = {10.1093/imrn/rnac252}}

@article{Bouck,
  author = "Boucksom, Sébastien",
     title = "Divisorial Zariski decompositions on compact complex manifolds",
     journal = "Ann. Sci. Ec. Norm. Supér.",
     pages = "45--76",
     volume = "Ser. 4, 37",
     number = "1",
     year = "2004"
   }

@article{Ogu,
   author  =  "Keiji Oguiso",
   title   =  "Automorphism groups of Calabi-Yau manifolds of Picard number 2",
   year    =  "2014",
   pages   =  "775-795",
   journal =  "J. Algebraic Geom.",
   number  =  "23"
   }

@article{Ein,
     author=  "Lawrence Ein and Robert Lazarsfeld and Mircea Mustață and Mihnea Popa",
   title=   "Asymptotic invariants of stable base loci",
   year=    "2006",
   pages=   "1701-1734",
   journal= "Ann. Inst. Fourier, Grenoble",
   volume=  "56",
   number="6"
}

@book{Nakayama,
    author  = "Noboru Nakayama",
    title   = "Zariski-decomposition and {A}bundance, MJS Memoirs.",
    year    = "2004",
    publisher  = "Mathematical Society of Japan",
    volume     = "14"
}

@article{Kuronya3,
     author=  "Alex Küronya and Victor Lozovanu",
   title=   "Geometric aspects of Newton-Okounkov bodies",
   journal= "Banach Center Publications",
   year=    "2017",
   volume =    "116"
}

@article{Kuronya1,
title = {Convex bodies appearing as Okounkov bodies of divisors},
author= {Alex Küronya and Victor Lozovanu and Catriona Maclean},
journal = {Adv. Math.},
volume = {229},
number = {5},
pages = {2622-2639},
year = {2012}
}

@article{Kuronya2,
title = {Positivity of Line Bundles and Newton-Okounkov Bodies},
author= {Alex Küronya and Victor Lozovanu},
journal = {Doc. Math.},
volume = {22},
pages = {1285-1302},
year = {2017}
}

@article{AKL14,
    author = {Anderson, Dave and Küronya, Alex and Lozovanu, Victor},
    title = {Okounkov Bodies of Finitely Generated Divisors},
    journal = {Int. Math. Res. Not. IMRN},
    volume = {2014},
    number = {9},
    pages = {2343-2355},
    year = {2013},
    month = {01}
}

@article{BBP2013,
    author = {Boucksom, Sébastien and Broustet, Amaël and Pacienza, Gianluca},
     title = {Uniruledness of stable base loci of adjoint linear systems via {M}ori theory},
   journal = {Math. Z.},
   volume = {275},
      year = {2013},
    number = {1-2},
     pages = {499--507}
}

@article{Ein1,
     author=  "Lawrence Ein and Robert Lazarsfeld and Mircea Mustață and Michael Nakamaye and Mihnea Popa",
   title=   "Restricted volumes and base loci of linear series",
   year=    "2006",
   pages=   "607-651",
   journal= "Amer. J. Math.",
   volume=  "131",
}

@article{KK,
author = {Kaveh, Kiumars and Khovanskii, A. G.},
year = {2009},
month = {04},
pages = {925-978},
title = {Newton-Okounkov bodies, semigroups of integral points, graded algebras
and intersection theory},
volume = {176},
journal = {Ann. of Math.},
}

@article{Mustata,
author="Robert Lazarsfeld and Mircea Mustață",
title="Convex bodies associated to linear series",
year="2009",
pages="783-835",
series="4",
number="5",
journal="Ann. Sci. Éc. Norm. Super.",
volume="42"
}

@article{BFJ,
author="Sébastien Boucksom and Charles Favre and Mattias Jonsson",
title="Differentiability of volumes of divisors and a problem of teissier",
year="2009",
pages="279-308",
journal="J. Algebraic Geom.",
volume="18"
}

@article{MZ13,
    author  = "Daisuke Matsushita and De-Qi Zhang",
    title   = "Zariski $F$-decomposition and lagrangian fibration on hyperk{\"a}hler manifolds",
    journal = "Math. Res. Lett.",
    year    = "2013",
    volume  = "20",
    number  = "5",
    pages   = "951-959"
}

@book{Joyce,
    author  = "Mark Gross and Daniel Huybrechts and Dominic Joyce",
    title   = "Calaby-Yau Manifolds and Related Geometries. Lectures at a Summer School in Nordfjordeid, Norway, June 2001",
    year    = "2003",
    publisher  = "Springer-Verlag"
}

@article{Beau,
    author  = "Arnaud Beauville",
    title   = "Variétés K{\"a}hleriennes dont la première classe de Chern est nulle",
    journal = "J. Differential Geom.",
    year    = "1983",
    volume  = "18",
    number  = "4",
    pages   = "755-782"
}

@article{Huy1,
   author  = "Daniel Huybrechts",
   title   =  "Compact Hyperk{\"a}hler Manifolds: Basic Results",
   year    = "1999",
   journal = "Invent. Math.",
   pages   = "63-113",
   volume  = "135"
 }

\end{document}